\numberwithin{equation}{section}
\newtheorem{thm}{Theorem}[section]
\newtheorem{prop}[thm]{Proposition}
\newtheorem{cor}[thm]{Corollary}
\newtheorem{lemma}[thm]{Lemma}
\newtheorem{conj}[thm]{Conjecture}
\theoremstyle{remark} 
\newtheorem{remark}[]{Remark}
\newcommand{\RR}{\mathbb{R}}
\newcommand{\CC}{\mathbb{C}}
\newcommand{\DD}{\mathbb{D}}
\newcommand{\Log}{\mathrm{Log}}
\def\const{\mathrm{const}}
\newcommand{\pd}{\partial}
\newcommand{\ol}{\overline}
\newcommand{\e}{\varepsilon}
\newcommand{\p}{\partial}
\newcommand{\be}{\begin{equation}}
\newcommand{\ee}{\end{equation}}
\newcommand{\cn}{\textrm{cn}}
\newcommand{\sn}{\textrm{sn}}
\newcommand{\dn}{\textrm{dn}}
\begin{document}

\title[Overdetermined problem in potential theory]{An overdetermined problem in potential theory}
\date{May 2012}

\author[Dmitry Khavinson]{Dmitry Khavinson}
\email{dkhavins@usf.edu}
\address{Dept. of Mathematics and Statistics, University of South Florida, Tampa, FL 33620}

\author[Erik Lundberg]{Erik Lundberg}
\email{elundber@math.purdue.edu}
\address{Dept. of Mathematics, Purdue University, West Lafayette, IN 47906}

\author[Razvan Teodorescu]{Razvan Teodorescu}
\email{razvan@usf.edu}
\address{Dept. of Mathematics and Statistics, University of South Florida, Tampa, FL 33620}

\keywords{roof function, exceptional domain, null quadrature domains, Schwarz function, free boundary}
\subjclass[2010]{Primary: 35N25, 35R35, 31A25, Secondary: 30E25, 30C20}


\begin{abstract}
We investigate a problem posed by L. Hauswirth, F. H\'elein, and F. Pacard \cite{HHP2011}, namely, 
to characterize all the domains in the plane that admit a ``roof function'', i.e., 
a positive harmonic function which solves simultaneously a Dirichlet problem with null boundary data, 
and a Neumann problem with constant boundary data. 
As they suggested, we show, under some a priori assumptions,
that there are only three exceptional domains: the exterior of a disk, a halfplane, 
and a nontrivial example found in \cite{HHP2011} that is the image of 
the strip $|\Im \zeta| < \pi/2$ under $\zeta \rightarrow \zeta + \sinh(\zeta)$.
We show that in $\RR^4$ this example does not have any axially symmetric analog
containing its own axis of symmetry.

\end{abstract}

\maketitle

\section{Introduction}

In \cite{HHP2011}, the authors have posed the following problem: 
find a smooth bounded domain $\Omega$ in a Riemannian manifold $\mathcal{M}_g$ with metric $g$, 
such that the first eigenvalue $\lambda_1$ of the Laplace-Beltrami operator on $\Omega$ has a corresponding real, 
positive eigenfunction $u_1$ satisfying $u_1 = 0, \frac{\p u_1}{\p n} = 1$ on the boundary of $\Omega$. 
Any such domain is called \emph{extremal} because it provides a local minimum for the first eigenvalue $\lambda_1$ of the Laplace-Beltrami operator, 
under the constraint of fixed total volume of $\Omega$ (see \cite{HHP2011} and references therein). 

In special cases one can find a sequence of extremal domains $\{ \Omega_t \}$ with increasing volumes, 
such that the limit domain $\Omega = \Omega_{t \to \infty}$ is unbounded, and its first eigenvalue vanishes as $t \to \infty$. 
This limit extremal domain is then called \emph{exceptional}, and the corresponding limit function $(u_{1,t})_{t \to \infty} \to u$ is a positive, 
harmonic function on $\Omega$ which solves simultaneously the overdetermined boundary value problem with null Dirichlet data and constant Neumann data. 

The problem of finding exceptional domains in $\mathbb{R}^n$ 
and their corresponding functions $u$ (called ``roof" functions by the authors of \cite{HHP2011}) 
is a nontrivial problem of potential theory. 
There is no obvious variational principle to use, 
on the one hand because $\Omega$ is unbounded (so the Dirichlet energy of $u$ \cite[Ch. 1]{Astala} will diverge), 
and, on the other hand, because the constant Neumann data constraint is not conformally invariant. 

In the absence of a suitable variational formulation, we may interpret the scaling $t \to \infty$ described above as a dynamical process, 
in which the pair $(\Omega_t, u_t)$ evolves so that the limit $t \to \infty$ solves the overdetermined problem. 
In other words, we can turn this observation into a constructive method for finding (building) exceptional domains. 
In order to do this, it is helpful to note that, upon compactification of the boundary $\p \Omega$ (with metric $d\sigma^2$), 
the pair $(\Omega, u)$ with flat metric becomes conformal to the half-cylinder $\mathcal{N} := \mathbb{R}_+ \times \overline{\p \Omega}$, with metric 
$$
ds^2 = e^{-2u} (du^2 + d\sigma^2).
$$ 
Under this reformulation, scaling of $(\Omega_t, u_t)_{t \to \infty}$ becomes equivalent to scaling of the metric structure given above, 
defined over the fixed space $\mathcal{N}$.  
This is reminiscent of the Ricci flow, in which the metric structure $g$ evolves with respect to a deformation parameter $t \in \mathbb{R}$ according to the equation 
$$
\frac{d g_{ij}}{dt} = - 2 R_{ij}, 
$$
with the right side of the equation given by the covariant Ricci tensor. 
It is known \cite{Top} that for the case of a two-dimensional manifold, with metric given by $ds^2 = e^{-2u}(dx^2 + dy^2)$, 
the Ricci flow equations reduce to a single nonlinear equation
$$
\frac{\p u}{\p t} = \nabla^2_{g} u 
$$
(since in two dimensions the Riemann tensor has only one independent component).
This is a heat equation with the generator given by the Laplace-Beltrami operator corresponding to the metric $ds^2$.
Therefore, if there is a stationary solution $\frac{\p u}{\p t} \to 0$ as $t \to \infty$, 
it will correspond to the scaling of the first eigenvalue $\lambda_1(t) \to 0$ and, 
by conformally mapping back $\mathcal{N}$ using the solution $u (t \to \infty)$, 
we will obtain the solution $(\Omega, u)$. 

In other words, we can summarize this constructive method for finding exceptional domains in $\mathbb{R}^2$ as follows: 
starting from a 2-dimensional Riemannian manifold with finite volume and metric encoded through the positive real function $u$, 
and boundary set defined via $u = 0$, consider the time evolution given by the Ricci flow, without volume  renormalization. 
Then \cite{Top} the manifold will remain Riemannian at all times, 
and in the $t \to \infty$ limit the function $u$ will become a solution of the nonlinear Laplace-Beltrami equation. 
Furthermore, if $u$ remains finite everywhere in the domain, then it is harmonic and satisfies both Dirichlet and Neumann conditions at all finite boundary components, 
so it is a solution for the overdetermined potential problem. Considered together with the (boundary) point at infinity, 
the manifold is equivalent \cite{Poincare} to a pseudosphere (flat everywhere except at the infinity point, with overall positive curvature).  
(We wish to emphasize that there is no reason to assume that such constructive methods would be exhaustive.)

Thus, so motivated, it is natural to try to characterize exceptional domains in flat Euclidean spaces.
The authors in \cite{HHP2011} suggested that in two dimensions there are only three examples:
a complement of a disk, a halfplane, 
and a nontrivial example obtained as the image of the strip $|\Im \zeta| \le \pi/2$ under the mapping $\zeta \rightarrow  \zeta + \sinh(\zeta)$.  
They posed as an open problem to determine if these are the only examples \cite[Section 7]{HHP2011}.
(They gave some evidence by characterizing the halfplane under a global assumption on the gradient of the roof function \cite[Prop. 6.1]{HHP2011}.)
They also posed the problem of finding nontrivial examples in higher dimensions and suggested the possibility of
axially symmetric examples similar to the nontrivial example in the plane \cite[Remark 2.1]{HHP2011}.

We address both of these problems.
The paper is organized as follows.  In Section 2, we review the theory of Hardy spaces in order to address a subtlety that arises
in connection with the regularity of the boundary of an exceptional domain.
This leads us to assume in our theorems that the domain $\Omega$ is Smirnov.
In Section 3, we characterize exteriors of disks as being the only exceptional domain whose boundary is compact.
In Section 4, we establish a connection between the ''roof function'' of an exceptional domain and the so-called \emph{Schwarz function} of its boundary,
and we also show that the boundary of a simply connected exceptional domain $\Omega$ can pass either (i) once or (ii) twice through infinity.
In Section 5, we show that Case (i) implies that $\Omega$ is a halfplane.
In Section 6, we show that Case (ii) implies that $\Omega$ is the nontrivial example found in \cite[Section 2]{HHP2011}.
In each of these theorems we assume that $\Omega$ is Smirnov, but we allow the roof function to be a weak solution
merely satisfying the boundary conditions almost everywhere.

In Section 7, we extend the result of Section 3 to higher dimensions.
In Section 8, we show that the nontrivial example from Section \ref{sec:nontrivial} does not allow an extension to axially symmetric domains in four dimensions, 
contrary to what was suggested in \cite[Remark 2.1]{HHP2011} (and we conjecture that this example has no analogues in any number of dimensions greater than two).

Sections 3 through 6 together partially confirm what was suggested in \cite[Section 7]{HHP2011}
under some assumptions on the topology of $\Omega$ and assuming that $\Omega$ is Smirnov.
In Section 9, we give concluding remarks including a conjecture that, up to similarity, 
there are only three \emph{finite genus} exceptional domains.
The additional assumption of finite genus is due to a remarkable example of an infinite-genus exceptional domains that appeared in the fluid dynamics literature
\cite{BSS76}.
See Section 9 for discussion.

\remark After this paper was submitted, Martin Traizet announced a more complete classification of exceptional domains \cite{MT2013}
after developing an exciting new connection to minimal surfaces.
He characterized the three examples among those having finitely many boundary components.
M. Traizet's preprint that appeared while we have been revising the previous version of our paper
finds a new beautiful connection of the problem with the theory of minimal surfaces.
From this point of view, he noticed the above-mentioned family of infinite genus examples \cite{BSS76} 
and also characterized them among periodic domains for which the quotient by the period has finitely many boundary components \cite[Theorem 13]{MT2013}.
For this latter result, he invokes a powerful theorem of W. H. Meeks and M. Wolf.
Our methods mostly rely on classical function theory ($H^p$ spaces) and potential theory and in most parts are different from Traizet's. 
Interestingly, as Traizet notes in his preprint \cite[Remark 5]{MT2013}, if one could prove his Theorem 13 by only invoking pure function theory 
this would give (via Traizet's results) a new and independent proof of the Meeks-Wolf result from minimal surfaces.
An attractive challenge!

\noindent {\bf Acknowledgement:} The authors are indebted to Dimiter Vassilev for bringing the article \cite{HHP2011} to their attention.
We wish to thank Alexandre Eremenko for sharing an improved proof of Theorem \ref{thm:Martin}
and Arshak Petrosyan and Koushik Ramachandran for pointing out the example of a cone as an exceptional domain.
We also wish to thank Martin Traizet for helpful discussion regarding his preprint.
The two first named authors acknowledge partial support from the NSF under the grant DMS-0855597.

\section{Classical vs. Weak Solutions, Regularity of the Boundary, and Hardy Spaces}\label{sec:Smirnov}

From the rigidity of the Cauchy problem, one might expect to obtain, ``for free'', regularity of the boundary of an exceptional domain 
(as is often the case for solutions of free boundary problems).
Unfortunately, the problem at hand is complicated by a remarkable family of examples with rectifiable but non-smooth boundaries, a.k.a. 
non-Smirnov domains - cf. \cite[Ch. 10]{Duren}. This results in adding a Smirnov condition to the assumptions on the domains if we desire to consider "weak solutions", 
i.e., harmonic "roof functions" satisfying the Dirichlet and Neumann boundary conditions almost everywhere with respect to the Lebesgue measure.

In order to address this subtlety, 
we first give some background from $H^p$ theory - cf. \cite{Duren} for details.

An analytic function $f:\DD \rightarrow \CC$ is said to belong to the Hardy class $H^p$, $0 < p < \infty$, if the integrals:
$$ \int_{0}^{2\pi}{|f(re^{i \theta})|^p d\theta} $$
remain bounded as $r \rightarrow 1$.

Recall that a \emph{Blaschke product} is a function of the form
$$ B(z) = z^m \prod_{n} \frac{|a_n|}{a_n}\frac{a_n - z}{1 - \ol{a_n} z},$$
where $m$ is a nonnegative integer and $\sum(1-|a_n|) < \infty$.
The latter condition ensures convergence of the product (See Theorem 2.4 in \cite{Duren}). 

A function analytic in $\DD$ is called an \emph{inner function} if its modulus is bounded by $1$ and its modulus has radial limit $1$ almost everywhere on the boundary.
If $S(z)$ is an inner function without zeros, then $S(z)$ is called a \emph{singular inner function}.

An \emph{outer function} for the class $H^p$ is a function of the form
\begin{equation}\label{eq:outer}
 F(z) = e^{i \gamma} \exp \left\{ \frac{1}{2 \pi} \int_{0}^{2 \pi} \frac{e^{it} + z}{e^{it} - z} \log \psi(t) dt \right\},
\end{equation}
where $\gamma$ is a real number, $\psi(t) \geq 0$, $\log \psi(t) \in L^1$, and $\psi(t) \in L^p$.

The following theorem \cite[Ch. 2, Ch. 5]{Duren} (also cf. \cite{fisher}) provides the parametrization of functions in Hardy classes by their zero sets, 
associated singular measures, and moduli of their boundary values.

\begin{thm}\label{thm:factorization}
Every function $f(z)$ of class $H^p$ ($p>0$) has a unique (up to a unimodular constant factor) factorization of the form $f(z) = B(z) S(z) F(z)$,
where $B(z)$ is a Blaschke product, $S(z)$ is a singular inner function, and $F(z)$ is an outer function for the class $H^p$.
\end{thm}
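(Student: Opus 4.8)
The plan is to establish the canonical factorization by isolating the three sources of boundary behavior of $f$: its zeros (captured by $B$), a possible singular measure on $\p\DD$ (captured by $S$), and the absolutely continuous part of $\log|f|$ on the circle (captured by $F$). Throughout I assume $f \not\equiv 0$, the trivial case being immediate.

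First I would dispose of the zeros. Applying Jensen's formula on the circles $|z|=r$ to the zeros $a_1, a_2, \ldots$ of $f$ in $\DD$, and controlling $\frac{1}{2\pi}\int_0^{2\pi} \log|f(re^{i\theta})|\,d\theta$ uniformly in $r$ via the elementary bound $\log^+ t \le t^p/p$ together with the $H^p$ hypothesis, one forces the Blaschke condition $\sum(1-|a_n|) < \infty$. This guarantees that the Blaschke product $B(z)$ built from the $a_n$ converges, so that $g := f/B$ is analytic and zero-free in $\DD$.

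Second comes the technical crux: a division lemma asserting that $g \in H^p$ with the same integral means, in fact $\|g\|_p = \|f\|_p$. Here I would work with the finite partial products $B_N$, observe that $|f/B_N| \ge |f|$ in $\DD$ while the radial boundary limits of $|B_N|$ tend to $1$ almost everywhere, and pass to the limit by a monotonicity argument together with Fatou's lemma — this is, in essence, F. Riesz's theorem that the Blaschke factor carries no integral mass. I expect this norm preservation, and the non-positivity established in the next step, to be the main obstacles, since both are precisely where the $H^p$ hypothesis (rather than mere analyticity or membership in the Nevanlinna class) is genuinely used.

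Third, I would factor the zero-free $g$. Since $g$ is zero-free, $\log|g|$ is harmonic, and the $H^p$ bound furnishes a harmonic majorant for it, so by the Herglotz--Riesz theorem $\log|g|$ is the Poisson integral of a finite signed measure whose singular part is forced to be non-positive; thus it can be written as $\log|g(e^{it})|\,\tfrac{dt}{2\pi} - d\nu$ with $\nu \ge 0$ singular and $\log|g(e^{it})| \in L^1$. I then define the outer function $F$ by \eqref{eq:outer} with $\psi = |g(e^{it})|$ and set $S := g/F$. By construction $|F|$ has boundary modulus $|g(e^{it})|$, so $|S| = 1$ almost everywhere on $\p\DD$; the measure $\nu \ge 0$ shows $|S| \le 1$ inside $\DD$, and $S$ inherits freedom from zeros, so $S$ is a singular inner function. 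Assembling gives $f = B\,S\,F$. For uniqueness, the zero set of $f$ determines $B$, the boundary modulus $|f(e^{it})|$ determines $F$, and the singular part $\nu$ of the representing measure of $\log|f|$ determines $S$, each up to a unimodular constant; since their product must reproduce $f$, the three constants collapse to a single overall factor.
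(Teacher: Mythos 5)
The paper contains no proof of this statement: it is the classical F.~Riesz--Smirnov canonical factorization theorem, and the text simply quotes it from Duren \cite[Ch.~2, Ch.~5]{Duren}. Your sketch follows, in outline, exactly the argument of that reference --- Jensen's formula plus $\log^+ t \le t^p/p$ for the Blaschke condition, F.~Riesz's division lemma via the partial products $B_N$ with a monotone convergence/Fatou passage to the limit, a Herglotz-type representation of $\log|g|$ for the zero-free quotient, and uniqueness read off from the zero set, the boundary modulus, and the singular measure --- so there is nothing in the paper to compare it against, and the route is the standard one.

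The one step that does not hold as written is the claim that, because $\log|g|$ admits a harmonic majorant, the singular part of its representing measure ``is forced to be non-positive.'' Possession of a harmonic majorant only places $g$ in the Nevanlinna class $N$, and within $N$ the singular part can be positive: if $S_0$ is a nonconstant singular inner function, then $1/S_0$ lies in $N$ (indeed $\log|1/S_0|$ is a positive harmonic function, hence its own harmonic majorant), yet its representing measure is the strictly positive singular measure of $S_0$. Non-positivity of the singular part is precisely the assertion $H^p \subset N^+$, and it requires the mean-value inequality
$$\log|g(z)| \;\le\; \frac{1}{2\pi}\int_0^{2\pi} \Re\left(\frac{e^{it}+z}{e^{it}-z}\right)\log|g(e^{it})|\,dt ,$$
which one proves by writing $\log|g(\rho z)|$ as the Poisson integral of $\log|g(\rho e^{it})|$ for $\rho<1$ and letting $\rho\to 1$: the positive parts $\log^+|g(\rho e^{it})|\le \tfrac1p|g(\rho e^{it})|^p$ are uniformly integrable because the $L^p$ means converge to $\int|g^*|^p$ (this is the only point where the full $H^p$ hypothesis, rather than $g\in N$, is used), while Fatou's lemma handles the negative parts. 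You correctly flag this as the crux, but the justification you offer for it is circular. With the displayed inequality supplied, the remainder of your argument --- defining $F$ by \eqref{eq:outer} with $\psi=|g^*|$ and checking that $S=g/F$ is zero-free with $|S|\le 1$ and unimodular boundary values --- closes up correctly.
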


Suppose $\Omega$ is a Jordan domain with rectifiable boundary and $f:\DD \rightarrow \Omega$ is a conformal map.
Then $f' \in H^1$ by Theorem 3.12 in \cite{Duren}.
By Theorem \ref{thm:factorization}, $f'$ has a canonical factorization $f'(z) = B(z) S(z) F(z)$,
and since $f$ is a conformal map $f'$ does not vanish, so $f'(z) = S(z) F(z)$.
Then $\Omega$ is called a \emph{Smirnov domain} if $S(z) \equiv 1$ so that $f'(z) = F(z)$ is purely an outer function.
This definition is independent of the choice of conformal map.

There are examples of non-Smirnov domains with, as above, 
$f'(z) = S(z) F(z)$, but now $F(z) \equiv 1$ and the singular inner function $S(z)$ is not constant.
Such examples were first constructed by M. Keldysh and M. Lavrentiev \cite{KeldLavr} using complicated geometric arguments.
Their existence was somewhat demystified by an analytic proof provided by P. Duren, H. S. Shapiro, and A. L. Shields \cite{DSS}.
Like the disk, such a domain has harmonic measure at zero (assuming $f(0) = 0$) proportional to arc-length.
Thus, its boundary is sometimes called a ``pseudocircle''.

Similarly, there are ``exterior pseudocircles'',
arising as the boundary of an unbounded non-Smirnov domain \cite{JonesSmirnov} for
which the harmonic measure at infinity is proportional to arclength,
and thus Green's function with singularity at infinity
provides a roof function that is a weak solution satisfying the boundary conditions almost everywhere.
Thus, this provides a pathological example of an exceptional domain in a weak sense. 
In order to construct such an unbounded non-Smirnov domain, let us follow the method in the above mentioned \cite{DSS},
which is presented in Duren's book \cite[Section 10.4]{Duren}.
We recall that the construction is carried out by ``working backwards'', first writing down a singular inner function $S(z)$
as a candidate for the derivative $f'(z)$ of the conformal map $f(z)$.
The difficulty is then to show that $f(z)$ is not only analytic, but is also \emph{univalent} so that it actually gives a conformal map from $\DD$ to some domain $\Omega$.
Univalence is established using a criterion of Nehari which states that the following growth condition on the Schwarzian derivative $(S f) (z)$
is sufficient for univalence:
\begin{equation}\label{eq:Nehari}
 (S f) (z) \leq \frac{2}{(1-|z|^2)^2}.
\end{equation}

Let us follow this procedure, indicating the step that needs to be modified.
Start with a measure $\mu \leq 0$, singular with respect to Lebesgue measure on the circle, yet sufficiently smooth,
so that it belongs to the Zygmund class $\Lambda_*$ (cf. \cite[Section 10.4]{Duren}).

We will also require $\mu$ to have the first moment zero, i.e.,
\begin{equation}\label{eq:mu}
 \int_0^{2 \pi} e^{i \theta} d \mu(\theta) = 0.
\end{equation}
This can always be acheived by symmetrizing $\mu$ around the origin and replacing $\mu$
by $\frac{1}{2} (d \mu (\theta) + d \mu (- \theta))$.
Then the center of mass is at the origin, which is (\ref{eq:mu}).

As in \cite{Duren}, let $F(z)$ be the Schwarz integral of $\mu$

$$ F(z) = \frac{1}{2 \pi} \int_0^{2 \pi} \frac{e^{i \theta} + z}{e^{i \theta} - z} d\mu(\theta) .$$

Let $g(z)$ be the exponential of a constant (to be chosen later) times $F$,
$$g(z) = \text{exp} \{ -a F(z) \}.$$
Here is where we depart slightly from \cite{Duren} in order to get an unbounded domain as the image of $f(z)$.
Instead of taking $g(z)$ as a candidate for $f'(z)$, we take
$$f'(z) = g(z) / z^2.$$
Note that the residue of $f'(z)$ is zero (from having made the first moment of $\mu$ zero) 
so that its antiderivative $f(z)$ is analytic in $\DD$ except for a simple pole at $z=0$.
Also, $|f'(z)| = 1$ a.e. on $\p \DD$.

A calculation shows that the Schwarzian derivative $S f$ of $f$ is:
$$ (S f ) (z) = (S g) (z) - \frac{2}{z} \frac{g'(z)}{g(z)} = - a F''(z) - \frac{a^2}{2} F'(z)^2 + \frac{2 a}{z} F'(z) .$$
As explicitly stated in \cite[Section 10.4]{Duren}, $F''(z)$, $F'(z)^2$, and $F'(z)$ are each 
$$O \left( \frac{1}{(1-|z|)^2} \right) .$$
Moreover, by the vanishing of the first moment of $\mu$, $F'(0) = 0$,
so that $F'(z) / z$ is also $O \left( \frac{1}{(1-|z|)^2} \right) $.
Thus, for a small enough choice of $a$,
$(S f) (z)$ satisfies the Nehari criterion for univalence (\ref{eq:Nehari}).

Hence, $f(z)$ is a conformal map mapping $\{ |z| < 1 \}$ onto the complement of a Jordan domain with rectifiable boundary.
To see why the boundary is rectifiabile, note that, as stated in \cite[Section 10.4]{Duren}),
$g(z) \in H^1$, and so $f'(z) = g(z) / z^2 $ is in $H^1$ in an annulus $0<r<|z|<1$.

This seemingly excessive construction of an exterior pseudocircle cannot be avoided by
simply taking an inversion of an interior pseudocircle; the result will be non-Smirnov, but it will \emph{not} be an exterior pseudocircle.
Nor can one simply take the complement. 
As P. Jones and S. Smirnov proved in \cite{JonesSmirnov}, the complement of a non-Smirnov domain is often Smirnov!
(This unexpected resolution of a long standing problem
put to rest  all  hopes to characterize the Smirnov property in terms of a boundary curve.)

\remark
The above examples of non-Smirnov exceptional domains lead to assuming $\Omega$ is Smirnov in our main theorems (but we allow $u$ to be a weak solution).

An alternative approach is to require $u$ to be a ``classical solution'' that satisfies the boundary condition everywhere
(and not just almost everywhere), then non-Smirnov domains are ruled out.
Moreover, real-analyticity of the boundary then follows automatically.
To be precise, we have the following Lemma.

\begin{lemma}\label{lemma:reg}
If $\Omega \subset \RR^2$ is exceptional and the roof function $u$ is a ``classical solution'' in $C^1(\overline{\Omega})$, then $\partial \Omega$ is locally real-analytic.
\end{lemma}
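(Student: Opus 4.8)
The plan is to pass to the hodograph (image) side, where the free boundary becomes a straight segment, and then to exploit the classical Schwarz reflection principle there. The key point is that one cannot reflect across $\partial \Omega$ directly, since that curve is precisely the unknown; so I would first straighten it out by means of the conformal map built from $u$, and only then reflect.

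Fix a point $p \in \partial \Omega$ and work in a simply connected neighborhood $U$ of $p$. Since $u \in C^1(\overline{\Omega})$ is harmonic with $|\nabla u| = 1 \neq 0$ on $\partial \Omega$, the implicit function theorem shows that $\partial \Omega \cap U$ is a $C^1$ Jordan arc. Let $v$ be a harmonic conjugate of $u$ on $U \cap \Omega$ and set $f = u + iv$. Because $\nabla v$ is the rotation of $\nabla u$, we have $f \in C^1(\overline{U \cap \Omega})$, and $f' = u_x - i u_y$ satisfies $|f'| = |\nabla u| = 1$ on $\partial \Omega$. Hence $f' \neq 0$ near $p$, so $f$ is a $C^1$ diffeomorphism onto its image with $C^1$ inverse there. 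Since $u = 0$ on $\partial \Omega$ while $u > 0$ in $\Omega$, the map $f$ sends $U \cap \Omega$ into the right half-plane $\{\Re w > 0\}$ and sends the boundary arc onto a segment $I$ of the imaginary axis.

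Next I would transfer the problem through the inverse $F = f^{-1}$, which is analytic on the image region $V = f(U \cap \Omega)$, is $C^1$ up to $I$, and is conformal; pulling back $|f'| = 1$ gives $|F'| = 1$ on $I$. Now $I$ lies on the imaginary axis, a straight line and hence trivially analytic, so the obstruction of reflecting across an unknown curve has been removed. Applying Schwarz reflection to $F'$, which is analytic in $V$, continuous up to $I$, and of unit modulus on $I$, it extends analytically across $I$ via $F'(w) = 1/\overline{F'(-\overline{w})}$. Integrating, $F$ itself extends analytically across $I$, and since $|F'| = 1$ there, $F$ remains conformal. Consequently $\partial \Omega \cap U = F(I)$ is the image of an analytic arc under a conformal map with non-vanishing derivative, hence is locally real-analytic.

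The main obstacle, and really the crux of the argument, is the reflection step: one must resist reflecting across $\partial \Omega$ and instead observe that the two overdetermined conditions together are exactly what is needed to run reflection on the hodograph side, with $u = 0$ straightening the boundary and $|\nabla u| = 1$ supplying the unit-modulus condition. Equivalently, this construction produces a holomorphic Schwarz function $S(z) = \overline{F}(-f(z))$ for $\partial \Omega$, where $\overline{F}(\zeta) := \overline{F(\overline{\zeta})}$, and the existence of such a function is tantamount to local real-analyticity of the boundary. The remaining technical care lies in verifying the $C^1$ regularity of $f$ and of $F$ up to the boundary, so that the reflection principle, which requires continuity of $F'$ up to $I$, genuinely applies.
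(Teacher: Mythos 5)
Your proposal is correct and follows essentially the same route as the paper: pass to the hodograph side via the analytic completion $f=u+iv$, note that the boundary arc is sent to a segment of the imaginary axis where the inverse map $F=f^{-1}$ satisfies $|F'|=1$, and reflect. The only cosmetic difference is that the paper reflects $\Log(F')$ (whose real part vanishes on the segment) rather than writing the unimodular reflection $F'(w)=1/\overline{F'(-\overline{w})}$ directly; these are equivalent.
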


\begin{proof}
The analytic completion $f(z) = u + iv$ (possibly multivalued) maps $\Omega$ into the right halfplane, since $u$ is positive.  
The Neumann condition for $u$ implies that $|f'(z)| = 1$ on $\partial \Omega$.
Also, $u \in C^1(\overline{\Omega})$ implies that $f' \in C(\overline{\Omega})$.

Choose a point $z_0 \in \partial \Omega$, 
and let $\zeta_0 = f(z_0)$.
Let $g(\zeta) = f^{-1}(\zeta)$ denote the local inverse of $f(z)$.
Choose a neighborhood $U$ of $\zeta$ and let $F:= \overline{U \cap \{\Re(\zeta) \geq 0 \}}$.
Choose $U$ small enough so that $g \in C(F)$.

Since $|g'(\zeta)| = 1$ on $\partial \Omega$, we can also choose $U$ small enough that $g'$ does not vanish in $F$.
This implies that $h(\zeta) = \Log( g'(\zeta) )$ is analytic in the interior of $F$ and continuous in $F$.
We have $\Re \{ h(\zeta) \}$ vanishes on the imaginary axis, since $|g'(\zeta)| = 1$ there.
Thus $h(\zeta)$ extends to a neighborhood of $\zeta_0$ by the Schwarz reflection principle.
This allows us to extend $g'(z)$ and therefore $g(z)$ and $f(z)$ extend analytically across $z_0$, since $u := \Re f = 0$ 
on $\p \Omega$ and $|\nabla u| = 1$ on $\p \Omega$ near $z_0$. 
The lemma is proved. 
\end{proof}

\begin{cor}\label{cor:reg}
 If $\partial \Omega$ is $C^2$-smooth and $\Omega$ is exceptional then $\partial \Omega$ is locally real-analytic.
\end{cor}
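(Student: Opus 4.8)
The plan is to reduce the corollary to Lemma \ref{lemma:reg} by showing that $C^2$-smoothness of $\partial\Omega$ forces the roof function $u$ to be a classical $C^1(\overline{\Omega})$ solution. Since the assertion is local, I would fix an arbitrary point $z_0 \in \partial\Omega$ and work inside a ball $B = B(z_0,r)$ chosen small enough that $\partial\Omega \cap B$ is a single $C^2$ arc and $\Omega \cap B$ lies on one side of it.

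The key step is boundary regularity for the Dirichlet problem. In $\Omega \cap B$ the roof function $u$ is harmonic, positive, and vanishes on the $C^2$ arc $\partial\Omega \cap B$. Because $C^2 \subset C^{1,\alpha}$ for every $\alpha \in (0,1)$, and the Dirichlet datum $u=0$ is as smooth as one likes, the local up-to-the-boundary Schauder estimate yields $u \in C^{1,\alpha}(\overline{\Omega} \cap B')$ for a slightly smaller ball $B' \subset B$. The mechanism is the standard one: flattening the arc by a $C^2$ (hence $C^{1,\alpha}$) diffeomorphism turns $\Delta$ into a uniformly elliptic operator with $C^{0,\alpha}$ coefficients and zero boundary data on the flat piece, to which the boundary Schauder theory applies. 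In particular $u \in C^1$ up to $\partial\Omega$ near $z_0$, so the Neumann datum $|\nabla u| = 1$ holds there in the classical sense.

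Having upgraded $u$ to a classical $C^1$ solution near $z_0$, I would then invoke the purely local argument already carried out in Lemma \ref{lemma:reg}: the analytic completion $f = u + iv$ maps into the right halfplane because $u>0$, one has $f' \in C(\overline{\Omega})$ near $z_0$ since $u \in C^1$, and $|f'| = 1$ on $\partial\Omega$ by the Neumann condition. The Schwarz reflection step of that proof then extends $f$ analytically across $z_0$, so $\partial\Omega$ is real-analytic in a neighborhood of $z_0$. As $z_0 \in \partial\Omega$ was arbitrary, this gives local real-analyticity of the whole boundary.

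The only nontrivial input is the boundary-regularity step: one must be sure that $C^2$ (equivalently $C^{1,\alpha}$) smoothness of $\partial\Omega$ truly delivers $C^1$ regularity of $u$ up to the boundary, and not merely continuity, so that the Neumann condition becomes a pointwise statement and Lemma \ref{lemma:reg} applies verbatim. This is a routine consequence of Schauder theory rather than a genuine difficulty, and everything past it is a direct citation of the lemma; the unboundedness of $\Omega$ causes no trouble since we argue only at finite boundary points and the estimate is local.
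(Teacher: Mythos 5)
Your proposal is correct and follows exactly the paper's route: the paper's proof is the one-line observation that $C^2$-smoothness of $\partial\Omega$ gives $u \in C^1(\overline{\Omega})$, after which one invokes Lemma \ref{lemma:reg}. You have merely made explicit (via boundary Schauder estimates after flattening) the elliptic-regularity step that the paper leaves implicit.
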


\begin{proof}
$C^2$-smoothness of $\partial \Omega$ implies that $u$ is in $C^1(\overline{\Omega})$.
It remains now to refer to Lemma 2.2.
\end{proof}

Using Kellogg's theorem on regularity of conformal maps up to the boundary, 
cf. \cite[Ch. 3]{Pom},
one easily extends the above corollary to $C^{1,\alpha}$, $\alpha >0$, boundaries and even merely to $C^1$ boundaries. 
We shall not pursue these details here. 
It would be interesting to find sharp necessary and sufficient conditions 
for the a priori regularity of the boundary that would guarantee the conclusion of Corollary \ref{cor:reg}.
As we have mentioned in the beginning of this section, 
it is necessary to assume that the domain is Smirnov, 
but it is not at all obvious that this is indeed sufficient, cf. a related discussion in \cite{DeCastroKhav} and \cite{DeCastroKhav2} regarding nonconstant functions in $E^p$ classes with real boundary values.


\section{The Case When Infinity is an Isolated Boundary Point}\label{sec:disk}

\begin{thm}\label{disk}
Suppose $\Omega$ is an exceptional domain whose complement $\CC \setminus \Omega$ is bounded and connected,
and assume $\Omega$ is Smirnov.
Then $\Omega$ is the exterior of a disk.
\end{thm}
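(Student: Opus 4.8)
The plan is to reduce the problem to the exterior of the unit disk by a conformal map, to identify the roof function there with a multiple of the Green's function, and then to invoke the Smirnov hypothesis to force the map to be affine.

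First I would set $K = \CC \setminus \Omega$; since $K$ is compact and connected, $\Omega \cup \{\infty\}$ is simply connected on the sphere, and (the Smirnov hypothesis presupposing a rectifiable Jordan boundary) there is a conformal map $\phi \colon \{|w| > 1\} \to \Omega$ with $\phi(\infty) = \infty$. Writing the analytic completion $f = u + iv$ of the roof function, the function $f' = u_x - i u_y = 2\p_z u$ is single-valued and analytic on $\Omega$, and the Neumann condition gives $|f'| = |\nabla u| = 1$ a.e. on $\p\Omega$. A flux computation comparing $\p\Omega$ with a large circle shows that $\int_{\p\Omega} \frac{\p u}{\p n}\,ds$ equals the length $L$ of $\p\Omega$, which pins the growth $u(z) = \frac{L}{2\pi}\log|z| + O(1)$ near infinity; I set $A = L/2\pi$.

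Next I would pull back the roof function, $U := u \circ \phi$, obtaining a positive harmonic function on $\{|w|>1\}$ with vanishing boundary values a.e.\ on $|w| = 1$. Using the Herglotz representation of positive harmonic functions on the exterior of the disk, $U(w) = A \log|w| + P[\nu](w)$ for a finite positive measure $\nu$ on the circle. The a.e.\ vanishing of the Dirichlet data kills the absolutely continuous part of $\nu$, while the \emph{constancy} of the Neumann data $|\nabla u| \equiv 1$ is what must rule out a singular part, forcing $\nu = 0$. Hence $U = A\log|w|$; equivalently, the roof function is a constant multiple of the Green's function of $\Omega$ with pole at infinity. This step is where I expect the real difficulty to lie, precisely because $u$ is only a weak solution: a positive singular measure has zero nontangential boundary values almost everywhere, so excluding it genuinely requires exploiting the finiteness and constancy of the normal derivative rather than the Dirichlet data alone.

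With $U = A\log|w|$ in hand, $f \circ \phi = A\log w + i c$ and so $(f\circ\phi)'(w) = A/w$; combining this with $|f'| = 1$ on $\p\Omega$ and the chain rule $|(f\circ\phi)'| = |f'\circ\phi|\,|\phi'|$ yields $|\phi'(w)| = A$ for a.e.\ $|w| = 1$. Thus $\phi'$ is analytic and zero-free on $\{|w|>1\}$ with constant boundary modulus. Finally I would invoke the Smirnov hypothesis: rectifiability places $\phi'$ in the relevant Smirnov class ($E^1$), and by the canonical factorization (Theorem \ref{thm:factorization}) its outer factor is the constant $A$ (the boundary modulus being constant) and its Blaschke factor is trivial (no zeros), so $\phi'$ equals $A$ times a singular inner function; the Smirnov condition forces this singular inner factor to be $1$. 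Therefore $\phi'$ is constant, $\phi$ is affine, and $\Omega$ is the exterior of a disk. It is worth emphasizing that this last step is exactly where the non-Smirnov ``exterior pseudocircles'' of Section \ref{sec:Smirnov} are excluded: for them $\phi'$ is a nonconstant singular inner function with unimodular boundary values, producing a weak roof function on a domain that is \emph{not} a disk.
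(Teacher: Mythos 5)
Your argument is essentially the paper's: both identify the roof function with a constant multiple of the Green's function with pole at infinity (the paper via B\^ocher's theorem, you via the Herglotz representation pulled back to the exterior of the unit disk), deduce from the combined Dirichlet and Neumann data that the derivative of the Riemann map has constant modulus a.e.\ on the unit circle, and then invoke the Smirnov hypothesis together with the canonical factorization to force that derivative to be constant. The only cosmetic difference is that the paper builds the conformal map directly as $g(z)=e^{u(z)/C+iv(z)}$ and works with $(g^{-1})'$ rather than with an abstract Riemann map $\phi$; the singular-measure issue you explicitly flag in the Herglotz step is the same point the paper passes over with the phrase ``in view of the Dirichlet data of $u$.''
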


\begin{proof}
Let $u$ be a roof function for $\Omega$.  
Positivity of $u$ implies, by B\^ocher's Theorem \cite[Ch. 3]{ABR92}, $u(z) = u_0(z) + C \log|z|$ for some constant $C$, 
where $u_0(z)$ is harmonic in $\Omega \cup \{ \infty \}$, and $u_0(z)$ approaches a constant at infinity (the ``Robin constant'' of $\partial \Omega$).
Thus, in view of the Dirichlet data of $u$, $u(z)$ is a multiple of the Green's function of $\Omega$ with a pole at infinity,
and taking $v(z)$ to be the harmonic conjugate of $u(z) / C$, 
we have a conformal map $g(z) = e^{u(z) / C + i v(z)}$  from $\Omega$ to the exterior of the unit disk (note that $g(z)$ is single-valued in $\Omega$).

Using both the Dirichlet and Neumann data, we have $|g'(z)| = 1/C$ a.e. on $\partial \Omega$,
and therefore
$$|(g^{-1})'(\zeta)| = \frac{1}{|g'(g^{-1}(\zeta))|} = C$$ a.e. on $\partial \DD$.
Since $g^{-1}$ has a simple pole at infinity, $(g^{-1})'$ is analytic.
Also, $(g^{-1})'$ is in $H^1(\CC \setminus \DD)$ since $\partial \Omega$ is rectifiable. 
Since $\Omega$ is Smirnov, the latter function is outer and also has constant modulus on the unit circle a.e.,
which together imply that it is constant.
(Recall from Section \ref{sec:Smirnov} that by formula (\ref{eq:outer})
an outer function is determined from its boundary values.)
Hence $g^{-1}$ is a linear function and $\partial \Omega$ is a circle.
\end{proof}

We defer proving a higher-dimensional version of this result until Section \ref{sec:Reichel},
but we mention here that under more smoothness assumptions the higher-dimensional case can be proved using a theorem of W. Reichel \cite{Reichel97}.

Under additional smoothness assumptions, 
the hypothesis of Theorem \ref{disk}
guarantees that $\Omega$ is a special type of \emph{arclength quadrature domain}.
The following is then an immediate corollary of a result of B. Gustafsson
\cite[Remark 6.1]{Gustafsson87}.

\begin{thm}[B. Gustafsson, 1987]\label{thm:gust}
Suppose $\Omega$ is a finite genus exceptional domain, with piecewise-$C^1$ boundary,
and infinity is not a point on the boundary of $\Omega$.
Then $\Omega$ is the exterior of a disk.
\end{thm}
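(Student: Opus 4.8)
The plan is to identify $\Omega$ as a quadrature domain for arclength whose only quadrature node is the point at infinity, and then to quote Gustafsson's classification. Since an exceptional domain is unbounded and infinity is assumed off the boundary, $\partial\Omega$ is compact, and the finite genus (finite connectivity) hypothesis makes it a finite union of piecewise-$C^1$ curves; by Corollary \ref{cor:reg}, applied arc by arc, these curves are in fact piecewise real-analytic. Exactly as in the proof of Theorem \ref{disk}, positivity of the roof function $u$ together with B\^ocher's theorem identifies $u$, up to a positive constant, with the Green's function of $\Omega$ with pole at infinity; I normalize so that $|\nabla u|=1$ a.e. on $\partial\Omega$. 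Let $f=u+iv$ be the (locally defined) analytic completion, so that $f'$ is single-valued and analytic in $\Omega$, behaves like $C/z$ at infinity, and satisfies $|f'|=1$ a.e. on $\partial\Omega$.

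The key step is to convert the arclength measure into a contour integral. On each analytic boundary arc the complex gradient $\overline{f'}=u_x+iu_y$ equals $\pm$ the unit normal; combining this with $|f'|=1$ gives, up to a choice of orientation, the pointwise identity $|dz| = \pm i\,f'(z)\,dz$ along $\partial\Omega$ (equivalently, in Schwarz-function language, $S'(z)=-f'(z)^2$, so that $\sqrt{S'}$ extends analytically into $\Omega$). Hence for every $h$ analytic in $\Omega$, continuous on $\overline{\Omega}$, and possessing a limit at infinity,
\[
\int_{\partial\Omega} h\,|dz| \;=\; \pm i\int_{\partial\Omega} h(z)\,f'(z)\,dz .
\]
The integrand $h f'$ is analytic in $\Omega$ and decays like $C\,h(\infty)/z$ at infinity, so deforming $\partial\Omega$ to a large circle and evaluating the residue at infinity yields $\int_{\partial\Omega} h\,|dz| = c\,h(\infty)$ for a fixed constant $c>0$. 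This is precisely the statement that $\Omega$ is a quadrature domain for arclength with a single node at infinity --- the ``special type'' referred to above.

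With $\Omega$ so identified, the theorem is an immediate consequence of Gustafsson's classification of finite-connectivity arclength quadrature domains \cite[Remark 6.1]{Gustafsson87}, which forces $\partial\Omega$ to be a single circle, so that $\Omega$ is the exterior of a disk. I expect the only real friction to be bookkeeping: justifying the boundary identity $|dz|=\pm i f'\,dz$ and the residue evaluation across the finitely many corners of the piecewise-$C^1$ boundary (where the Neumann datum, hence $|f'|=1$, holds only almost everywhere), and checking that the data extracted above --- finite connectivity together with an arclength quadrature identity supported at infinity --- matches verbatim the hypotheses of Gustafsson's result. The finite genus assumption enters only at this last step, and it cannot be dropped, in view of the infinite-genus exceptional domain of \cite{BSS76}.
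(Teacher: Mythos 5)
Your proposal is correct and follows essentially the same route as the paper: the boundary identity $ds = i f'(z)\,dz$ coming from the Dirichlet/Neumann data, contour deformation to infinity using $f'(z)=O(|z|^{-1})$, and an appeal to Gustafsson's Remark 6.1 on arclength (null) quadrature domains. The one step the paper makes explicit and you defer to ``bookkeeping'' is the passage from your test class to the class $E(\Omega)$ actually appearing in Gustafsson's theorem, which the paper handles by proving the identity for rational functions vanishing at infinity and then invoking their density in $E^p$ classes for piecewise-$C^1$ boundaries.
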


This removes the condition that the complement of $\Omega$ is connected.

\begin{proof}
We will show that $\Omega$ is an \emph{arclength null quadrature domain} for analytic functions vanishing at infinity.
At first, 
consider as a class of test functions to integrate over $\partial \Omega$ rational functions $r(z)$ in $\Omega$ vanishing at infinity.

Let $f(z) = u(z) + i v(z)$ be the analytic completion of the roof function $u$.
Note that $f'(z)$ is single-valued (since it is the conjugate of the gradient of $u$),
and by B\^ocher's theorem cited above $f'(z) = O(|z|^{-1})$.
Since the gradient of $u$ is the inward normal of $\partial \Omega$, $\ol{f'(z)} = \frac{1}{f'(z)}$ is as well.
The unit tangent vector $\frac{dz}{ds}$ is a $90$-degree rotation of the normal vector $\frac{1}{f'(z)}$.
Thus, $ i f'(z) dz = ds.$
We then have a quadrature formula for integration of $r(z)$ with respect to arclength:
\begin{equation}\label{eq:alnqd}
 \int_{\partial \Omega} r(z) ds = i \int_{\partial \Omega} r(z) f'(z) dz = 0,
\end{equation}
where the vanishing of this integral is obtained by deforming the contour to infinity
where $f'(z) r(z) = O(|z|^{-2})$.
Indeed, $r(z) = O(|z|^{-1})$ by assumption on the test class, and $f'(z) = O(|z|^{-1})$ as mentioned above.

If the boundary of $\Omega$ is piecewise-$C^1$ then the rational functions are dense in $E^p$ classes
(see \cite[Thm. 10.7]{Duren}, and for the multiply connected case, \cite{TK1}, \cite{TK2}, \cite{TK3}).
In particular, the rational functions
$r(z)$, vanishing at infinity, are dense in the space of functions $E(\Omega)$ considered in \cite{Gustafsson87}.
Thus, (\ref{eq:alnqd}) shows that $\Omega$ is an arclength null quadrature domain for this space of functions,
and the result now follows from Remark 6.1 in \cite{Gustafsson87}.
\end{proof}

\section{The Schwarz Function of an Exceptional Domain}\label{sec:Schwarz}

The \emph{Schwarz function} of a real-analytic curve $\Gamma$ is the (unique and guaranteed to exist near $\Gamma$) 
complex-analytic function that coincides with $\bar{z}$ on $\Gamma$.
For the basics on the Schwarz function we refer to \cite{Davis74} and \cite{Shapiro92}.

We recall two basic facts needed in the proof of the next proposition.

\begin{itemize}
\item[(i)] On $\Gamma$, $|S'(z)| = 1$. 
\item[(ii)] The complex conjugate of $\sqrt{-S'(z)}$ is normal to $\Gamma$.
\end{itemize}

Statement (i) follows from the chain rule and the fact that the complex conjugate of the Schwarz function, $\ol{S(z)}$, is an involution (see \cite[Ch. 7]{Davis74}).
Statement (ii) follows from the formula for the complex unit tangent vector
$T(z) = \frac{dz}{ds} = \frac{1}{\sqrt{S'(z)}}$
expressing the derivative of $z$  with respect to the arc-length along $\Gamma$ (see again \cite[Ch. 7, Formula (7.5)]{Davis74}).

\begin{prop}\label{Schwarz}
If $\Omega$ is an exceptional domain such that the roof function is a classical solution,
then the $z$-derivative of the roof function is given by $u_z(z) = c\sqrt{-S'(z)}$, where $c$ is a real constant and $S(z)$ is the Schwarz function of $\partial \Omega$.
In particular, $S'(z)$ is analytic throughout $\Omega$.
\end{prop}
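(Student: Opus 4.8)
The plan is to express the inward unit normal to $\Gamma := \p\Omega$ in two different ways — once through the roof function and once through the Schwarz function — and to show that these two expressions must agree up to a constant. Since $u$ is a classical solution, Lemma~\ref{lemma:reg} guarantees that $\Gamma$ is locally real-analytic, so the Schwarz function $S$ of $\Gamma$ exists and is analytic in a neighborhood of $\Gamma$, and facts (i) and (ii) above are available. Let $f = u + iv$ be the (possibly multivalued) analytic completion of $u$. First I would record that $f'(z) = 2u_z(z)$, where $u_z = \tfrac12(u_x - iu_y)$, and that $u_z$ is \emph{single-valued and analytic throughout} $\Omega$: indeed $\p_{\bar z} u_z = \tfrac14 \Delta u = 0$ by harmonicity, and $u_z$ depends only on $u$ and not on the multivalued conjugate $v$. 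This global analyticity of $u_z$ is what will ultimately be transferred to $S'$.

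Next I would interpret the boundary data geometrically on $\Gamma$. Since $u = 0$ on $\Gamma$ and $u > 0$ inside, the gradient $\nabla u = u_x + iu_y = \ol{f'}$ points along the inward normal, and the Neumann condition gives $|\ol{f'}| = |\nabla u| = 1$, so $\ol{f'}$ is the inward \emph{unit} normal. By fact (ii), $\ol{\sqrt{-S'(z)}}$ is also normal to $\Gamma$, and by fact (i) it has modulus $1$. Two unit vectors both normal to $\Gamma$ can differ only by a real sign, so at each point of $\Gamma$ one has $f'(z) = \pm\sqrt{-S'(z)}$; equivalently the ratio $h := f'/\sqrt{-S'} = 2u_z/\sqrt{-S'}$ takes values in $\{+1,-1\}$ on $\Gamma$.

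To upgrade this pointwise statement to a constant, I would note that $h$ is analytic and nonvanishing in a neighborhood of $\Gamma$ (a local analytic branch of $\sqrt{-S'}$ exists because $-S'$ is nonvanishing there). Being continuous on each connected component of $\Gamma$ with values in the discrete set $\{\pm 1\}$, $h$ is constant on that component, and being analytic and equal to a constant on an arc, it is identically that constant by the identity theorem. Hence $2u_z = c_0\sqrt{-S'}$ with $c_0 = \pm 1$, i.e. $u_z = c\sqrt{-S'}$ with the real constant $c = \tfrac12 c_0$ (so $c = \pm\tfrac12$ under the normalization $\p u/\p n = 1$). Finally, squaring gives $S'(z) = -u_z(z)^2/c^2$; since the right-hand side is single-valued and analytic on all of $\Omega$, this both proves the claimed formula and furnishes the analytic continuation of $S'$ to all of $\Omega$.

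I expect the main obstacle to be precisely the passage from the pointwise sign ambiguity $f' = \pm\sqrt{-S'}$ to a genuine constant: one must fix a consistent local analytic branch of $\sqrt{-S'}$ near $\Gamma$ (available since $|S'| = 1$ there) and then combine connectedness of the boundary components with the identity theorem. A secondary point requiring care is the orientation bookkeeping — verifying that $\ol{f'}$ and $\ol{\sqrt{-S'}}$ are genuinely \emph{parallel} normals, so that their ratio is real, rather than merely both being normal directions — which is exactly what facts (i)--(ii) are designed to supply.
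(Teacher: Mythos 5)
Your proof is correct and follows essentially the same route as the paper: establish real-analyticity of $\Gamma$ via Lemma~\ref{lemma:reg}, use facts (i)--(ii) to see that $u_z$ and $\sqrt{-S'}$ are parallel unit (up to the factor $\tfrac12$) normals on $\Gamma$, deduce $u_z = c\sqrt{-S'}$ there with $c$ real, and propagate the identity by analyticity to continue $S'$ into all of $\Omega$. The only difference is that you spell out more carefully the passage from the pointwise sign ambiguity to a single constant (local branch of $\sqrt{-S'}$ plus the identity theorem), a point the paper's proof leaves implicit.
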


\remark If, for instance, the constant Neumann data for the roof function is $1$, then the constant above $c = \pm 1/2$ where the sign depends on the orientation of the boundary.

\begin{proof}

Lemma \ref{lemma:reg} implies that $\Gamma$ is locally real-analytic.  
So $\Gamma$ has a Schwarz function $S(z)$.
The complex conjugate of the analytic function $u_z$ is normal to $\Gamma$ (since $u$ has zero Dirichlet data).
In light of the constant Neumann data, we then have $|u_z(z)| = |(u_z(z))^*| = \frac{1}{2}|u_x + iu_y| = \frac{1}{2}\sqrt{u_x^2+u_y^2}$ is constant on $\Gamma$.
This, along with the statements (i) and (ii) above, shows that on $\Gamma$ the vectors $u_z(z)$ and $\sqrt{-S'(z)}$ are parallel and each have constant length.
Therefore, for some real constant $c$, the equation $u_z(z) = c\sqrt{-S'(z)}$ holds on $\Gamma$.
But since $u_z$ and $\sqrt{-S'(z)}$ are both analytic, the equation is true everywhere that either side is defined.
In particular, this guarantees analytic continuation of $S'(z)$ throughout $\Omega$.
\end{proof}

Let us use the Schwarz function to give a heuristic argument that the boundary of an exceptional domain can pass through infinity at most twice.
In fact, the angle between consecutive arcs at infinity must be $\pi$
(and obviously there cannot be more than two such angles at infinity).
Suppose the boundary of a domain has a corner where two arcs meet at an angle different from $0$, $\pi$, or $2\pi$.
Then the derivatives of the Schwarz functions of the two arcs have a branch cut along a third arc that propagates into the domain from the corner.
To see why this is the case, note that the Schwarz function of an arc can be approximated near a point by 
the Schwarz function of the tangent line.
Thus, to first order, the jump along the branch cut is linear, so to zeroth order, the jump of $S'$ is determined by the slopes of the tangent lines.
If the angle is $0$ or $2 \pi$ then the tangent line is the same for each arc, but the orientation changes, 
so there is still a jump due to the sign change. 
In the case of an angle of $\pi$ both the tangent line and the orientation are unchanged.
Thus, for any angle other than $\pi$, $S'(z)$ has a jump across a branch cut between the two boundary boundary components.
For an exceptional domain, $u$ is a global solution throughout $\Omega$, and so Proposition \ref{Schwarz} indicates that
the Schwarz function cannot have such branch cuts.  Thus, the angle between consecutive boundary arcs at infinity can only be $\pi$,
and there can be at most two such angles.

In the above informal argument, we have assumed that each arc is real analytic at infinity, so that the Schwarz function has an expansion.
Alexandre Eremenko related to us the following proof \cite{AE} using ideas from \cite{BE} that extend techniques due to Ch. Pommerenke.
No regularity assumptions on $\p \Omega$ are required.
Also, an important part of the theorem readily extends to higher dimensions.

\vspace{.1in}

We recall that a Martin function is a positive harmonic function
$M$ in a region $\Omega$
with the property that for any positive harmonic function
$v$ in $\Omega$, the condition $v\leq M$
implies that $v=cM$, where $c>0$ is a constant.
(Often, Martin functions are called minimal harmonic functions - cf. \cite{Heins1950}.)
Martin functions on finitely connected domains are simply Poisson kernels evaluated at points of the Martin boundary, 
the boundary under Caratheodory compactification (prime ends) of the domain (see \cite{Brelot71}).

\begin{thm}[A. Eremenko \cite{AE}]
\label{thm:Martin}
 The roof function $u$ of any exceptional domain $\Omega$ is a convex combination of at most two Martin functions of $\Omega$ at infinity.
 Moreover, $u(z) = O(|z|)$, and in two dimensions we also have $\nabla u(z) = O(1)$ in $\Omega$.
\end{thm}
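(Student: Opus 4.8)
The plan is to obtain the Martin representation of $u$ and then bound the number of Martin boundary points that can enter it, using the order-one growth of $u$. Since $u$ is positive and harmonic on $\Omega$, the Martin representation theorem writes $u(z)=\int_{\Delta_1}K(z,\xi)\,d\mu(\xi)$, where $K(\cdot,\xi)$ is the minimal Martin kernel normalized at a base point $z_0$, $\Delta_1$ is the minimal Martin boundary, and $\mu\ge 0$. Because $u$ vanishes continuously on $\partial\Omega$ whereas a minimal kernel associated with a finite part of the boundary does not vanish there, $\mu$ must be supported on the minimal Martin points lying over infinity. Granting that there are at most two such points $\xi_1,\xi_2$, the integral collapses to $u=\mu(\{\xi_1\})M_1+\mu(\{\xi_2\})M_2$ with $M_i:=K(\cdot,\xi_i)$; dividing by $u(z_0)=\mu(\{\xi_1\})+\mu(\{\xi_2\})$ exhibits $u$ as a convex combination of at most two (normalized) Martin functions, which is the first assertion.

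Next I would establish the growth bound $u(z)=O(|z|)$. The normalization of the data is the key: since $u=0$ and $|\nabla u|=1$ on $\partial\Omega$, $u$ is comparable to the distance to $\partial\Omega$ near the boundary, and a Phragm\'en--Lindel\"of / barrier comparison propagates a linear bound into the interior; this argument is dimension-free. In two dimensions I would then sharpen this to $\nabla u=O(1)$ by passing to the analytic completion $f=u+iv$ and its derivative $f'=u_x-iu_y$, which is analytic with $|f'|=1$ a.e.\ on $\partial\Omega$. The function $\log|f'|=\log|\nabla u|$ is subharmonic and vanishes a.e.\ on $\partial\Omega$, and I would bound it above using the already-known linear bound on $u$ together with interior gradient estimates; a bound $\log|f'|\le M$ is exactly $\nabla u=O(1)$.

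The heart of the proof is the bound of two on the number of minimal Martin points at infinity, which I would obtain by a Denjoy--Carleman--Ahlfors type estimate. If there were $n$ distinct such points, they would correspond to $n$ pairwise disjoint tracts reaching infinity on which $u$ is large. Carleman's length--area method bounds the growth of $u$ in the $i$-th tract from below by $\exp\bigl(\pi\!\int dr/(r\,\theta_i(r))\bigr)$, where $\theta_i(r)$ is the angular measure of that tract on the circle $|z|=r$; since $\sum_i\theta_i(r)\le 2\pi$, comparing this forced growth with the linear bound $u=O(|z|)$ forces each $\theta_i\ge\pi$, whence $n\le 2$. This is the step drawn from the Pommerenke circle of ideas via \cite{BE}, and its virtue is that it uses only the growth of $u$ and the topological separation of the tracts, so no regularity of $\partial\Omega$ is needed. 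In higher dimensions the same count follows by replacing Carleman's estimate with a Friedland--Hayman type inequality for the spherical eigenvalues of the tracts, which again caps the number of order-one tracts at two.

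I expect the principal obstacle to be making the tract argument rigorous directly at the level of the Martin boundary: one must match each minimal Martin point at infinity with a genuine asymptotic tract of $u$, verify that distinct minimal points yield disjoint tracts, and carry out the angular-measure bookkeeping without assuming the accesses to infinity are smooth, or even rectifiable. A secondary difficulty lies in the second step, where the possible zeros of $f'$ and the merely almost-everywhere boundary condition must be handled by treating $\log|f'|$ subharmonically and invoking the boundary values of $u$ only a.e.
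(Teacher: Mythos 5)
Your overall architecture coincides with the paper's: establish the linear growth bound first, then count the minimal Martin points at infinity by a Carleman length--area (Denjoy--Carleman--Ahlfors) tract argument, with Friedland--Hayman replacing Carleman in higher dimensions --- the paper cites exactly Kjellberg \cite{K} and \cite{HF} for that step, so the heart of your proposal is the paper's proof. Two places differ. For $u(z)=O(|z|)$ the paper uses no barrier and no Phragm\'en--Lindel\"of: Harnack's inequality in the ball $D(a,R)$ with $R=\dist(a,\partial\Omega)$ gives $u(a)\le 2R\,|\nabla u(z_1)|$ at the nearest boundary point $z_1$, and the Neumann normalization $|\nabla u(z_1)|=1$ yields $u(a)\le 2\,\dist(a,\partial\Omega)\le 2(|a|+\const)$ in one line; your ``comparable to the distance near the boundary, then propagate'' sketch would require uniformity near an a priori irregular boundary and is harder to make rigorous, so you should adopt the Harnack version. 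The more serious divergence is the gradient bound: your primary route --- treat $\log|f'|$ as subharmonic with a.e.\ zero boundary values and push a maximum principle --- is precisely what the paper is structured to avoid. Section 2 exhibits non-Smirnov domains on which a bounded analytic function of modulus $1$ a.e.\ on the boundary need not be constant, and on an unbounded domain with merely a.e.\ boundary data there is no maximum principle for $\log|f'|$ without extra hypotheses ($N^+$ membership, the Smirnov condition, control at infinity); the remark preceding the theorem makes exactly this point. The paper instead forms $w_R=|\nabla u|/(u+R)$, shows $\Delta\log w_R\ge w_R^2$, and compares with the explicit solution $v=2R/(R^2-|z-z_0|^2)$ of $\Delta\log v=v^2$ (an Ahlfors--Schwarz--type comparison), obtaining $w_R\le 2/R$ and hence $|\nabla u|\le 2$. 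That said, your parenthetical fallback closes the gap more cheaply than either route: the interior gradient estimate for positive harmonic functions, $|\nabla u(a)|\le C_n\,u(a)/\dist(a,\partial\Omega)$, combined with $u(a)\le 2\,\dist(a,\partial\Omega)$ from the first step, gives $|\nabla u|=O(1)$ at once, and in every dimension, though without the paper's explicit constant $2$. I would delete the $\log|f'|$ detour and state that two-line argument instead.
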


\remark
M. Traizet \cite{MT2013} obtained the estimate $|\nabla u| \leq 1$ in $\Omega$ for domains with finitely many boundary components
using the Phragmen-Lindelof principle.
For Smirnov domains $\Omega$ it suffices to show that $u_z$ belongs to the class $N^+$ (cf. \cite{DeCastroKhav})
in order to conclude that the analytic function $u_z$ is bounded by $1$ in $\Omega$.
However, even this assumption is not needed here, and it is possible to establish the estimate on $\nabla u$ in full generality.
Alexandre Eremenko has kindly permitted us to include his argument here.

\begin{proof}
First we note that, as observed in \cite[Lemma 1]{BE}, if $u$ is a positive
harmonic function in a disk (or a ball in higher dimensions), $D(a,R)$, of radius $R$ centered at $a$, and $u(z_1)=0$ for some
boundary point $z_1$, then
\begin{equation}\label{f}
u(a)\leq 2R|\nabla u(z_1)|.
\end{equation}
This immediately follows from Harnack's inequality for $D(a,R)$ as for $z \in D(a,R)$
$$\frac{u(a)}{R+|z-a|}\leq\frac{u(z)}{1-|z-a|}=\frac{u(z)-u(z_1)}{1-|z-a|},$$
and letting $z \to z_1$ establishes (\ref{f}).

Applying (\ref{f}) when $a \in \Omega$ and $R$ is
the distance from $a$ to $\partial \Omega$, gives $u(a)\leq 2R\leq 2(|a|+\const)$.
So $u(z)=O(|z|),$ as $z\to\infty$.

The fact that $u$ is a combination of at most two Martin functions now follows from a standard argument using Carleman's inequality, see for example \cite{K}.
For the higher-dimensional case, one must use \cite{HF} instead of \cite{K}.

Next we show, in the two dimensional case, the additional claim that
$\nabla u(z) = O(1)$.
Let $R>0$ and consider an auxilliary function
$$w_R=\frac{|\nabla u|}{u+R},$$ where $R>0$ is a parameter.
A direct computation shows that
\begin{equation}\label{dwa}
\Delta\log w_R\geq w_R^2,
\end{equation} 
and $w_R(z)=1/R$ for $z\in\partial D$. 
We claim that 
\begin{equation}\label{tri}
w_R(z)\leq 2/R,\quad z\in D,
\end{equation}
from which the result follows by letting $R \rightarrow \infty$ which gives $|\nabla u| \leq 2$ in $\Omega$.

Suppose, contrary to (\ref{tri}), that $w_R(z_0)>2/R,$ for some $z_0\in \Omega$.
Let 
$$v(z)=\frac{2R}{R^2-|z-z_0|^2},\quad z\in D(z_0,R)=\{ z:|z-z_0|< R\}.$$
Obviously, $v(z)\geq 2/R$.  A computation reveals that
$\Delta\log v=v^2.$ 
Let
$$K=\{ z\in \Omega \cap D(z_0,R): w_R(z)>v(z)\}.$$
We have $z_0\in K$, since $v(z_0)=2/R$.
Let $K_0$ be the component of $K$, containing $z_0$. Then we have
$w_R(z)=v(z)$ on $\partial K_0$, since $w_R(z)< v(z)$ on $\partial \Omega \cap D(z_0,R)$
while $v(z)=+\infty$ on $\partial D(z_0,R)$.
On the other hand,
$$\Delta(\log w_R-\log v)\geq w_R^2-v^2>0 \quad\mbox{in}\quad K_0.$$
So the subharmonic function $\log u-\log v$ is positive in $K_0$ and vanishes on the boundary,
a contradiction.
\end{proof}

\remark
This a priori estimate implies the following corollary showing that the boundaries of exceptional domains
are extremely regular.
Namely, they are locally real analytic and even parameterized from the unit circle by the antiderivative of a rational function. 
In particular, it validates the preceding argument using the Schwarz function, 
and establishes that the boundary passes at most twice through infinity each time with an angle of $\pi$.
The only additional assumptions needed here are that the domain is Smirnov (cf. Section 2) and simply connected.

\begin{cor}
\label{cor:atoms}
 Let $\Omega$ be a simply connected Smirnov domain, 
 and let $h(\zeta)$ be the conformal map from $\DD$ to $\Omega$.
 If $\Omega$ is exceptional then $h'(\zeta)$ is a rational function, and either:
 
 Case (i). $h'$ has one pole on $\partial \DD$, or 
 
 Case (ii). $h'$ has two poles on $\p \DD$.
 \end{cor}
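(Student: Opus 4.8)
The plan is to transplant everything to the unit disk via the Riemann map $h:\DD\to\Omega$ and to combine the two outputs of Theorem \ref{thm:Martin}—the Martin representation of $u$ and the bound $\nabla u=O(1)$—with the Smirnov hypothesis to force rationality. First I would record the two analytic objects attached to the roof function: the analytic completion $f=u+iv$, whose derivative $f'=2u_z$ is single-valued, and the pullback $\phi:=f\circ h$. Since $\Omega$ is simply connected, its Martin boundary is the prime-end boundary $\partial\DD$, and the Martin functions at infinity are precisely the Poisson kernels $P(\zeta,\zeta_j)$ at the point(s) $\zeta_1,\zeta_2\in\partial\DD$ with $h(\zeta_j)=\infty$. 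Hence Theorem \ref{thm:Martin} gives
$$u(h(\zeta))=a_1P(\zeta,\zeta_1)+a_2P(\zeta,\zeta_2),\qquad a_j\ge 0,$$
with at most two terms, so its analytic completion is the explicit \emph{rational} function
$$\phi(\zeta)=a_1\frac{\zeta_1+\zeta}{\zeta_1-\zeta}+a_2\frac{\zeta_2+\zeta}{\zeta_2-\zeta}+ic,$$
whence $\phi'$ is rational with double poles at the $\zeta_j$ and only finitely many zeros.

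Next I would identify the inner factor. Because $\nabla u=O(1)$, the function $I:=f'\circ h=2\,(u_z\circ h)$ lies in $H^\infty(\DD)$, and the Neumann condition $|f'|=1$ on $\partial\Omega$ gives $|I|=1$ a.e.\ on $\partial\DD$; for a bounded function this forces $\|I\|_\infty=1$, so $I$ is an inner function. The chain rule then yields the factorization
$$\phi'(\zeta)=(f'\circ h)(\zeta)\,h'(\zeta)=I(\zeta)\,h'(\zeta),$$
which is the engine of the argument: $\phi'$ is rational and completely known, $I$ is inner, and $h'$ is analytic and zero-free in $\DD$.

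The decisive step is to feed the Smirnov hypothesis into this factorization. Being the derivative of the conformal map onto a Smirnov domain, $h'$ has trivial singular inner factor, and as it is also zero-free it is a (generalized) outer function. Since the singular inner factor is multiplicative, comparing it on both sides of $\phi'=I\,h'$ together with the fact that the rational function $\phi'$ carries no singular inner factor forces the singular inner factor of $I$ to be trivial as well. Thus $I$ is a Blaschke product; and because $h'$ is analytic and nonvanishing in $\DD$, the zeros of $I$ in $\DD$ coincide with the finitely many zeros of $\phi'$, so $I$ is a \emph{finite} Blaschke product and hence rational. Then $h'=\phi'/I$ is rational, and its only poles sit where $\phi'$ has poles, namely at $\zeta_1,\zeta_2$, giving exactly one pole on $\partial\DD$ (Case (i)) or two (Case (ii)).

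I expect the main obstacle to be the Hardy-space bookkeeping in the unbounded setting: neither $h'$ nor $\phi'$ lies in $H^1$ (each has boundary poles), so one must verify that $\phi'\in H^p$ for $p<\tfrac12$ in order to apply the canonical factorization of Theorem \ref{thm:factorization} and the multiplicativity of the singular inner factor within the Nevanlinna class, and one must interpret the Smirnov property correctly for the derivative of a conformal map that blows up at boundary points. The other point requiring care is the transplantation statement—that the Martin functions of $\Omega$ at infinity correspond to Poisson kernels at the prime ends over infinity—so that ``at most two Martin functions'' translates precisely into ``at most two poles on $\partial\DD$.''
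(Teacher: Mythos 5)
Your argument is essentially the paper's own proof: you pull back the roof function to $\DD$, use Theorem \ref{thm:Martin} to write $f\circ h$ as a Herglotz/Poisson integral of a measure with at most two atoms (so $\phi'$ is rational with at most two double boundary poles), identify $f'\circ h$ as the inner factor and $h'$ as the outer factor via the Smirnov hypothesis, and invoke uniqueness of the canonical factorization. Your closing caveats—that the factorization must be applied in $H^p$ for $p<\tfrac12$ because of the boundary poles, and that boundedness of $f'\circ h$ rests on the gradient estimate of Theorem \ref{thm:Martin}—are correct points of care that the paper treats more tersely, but they do not change the route.
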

 
 \begin{proof}
Let $u$ be a roof function for $\Omega$, and $f(z) = u + i v$ its analytic completion.
Since $u > 0$, $f(z)$ takes $\Omega$ into the right halfplane, and $f(h(\zeta))$
takes the unit disk $\DD$ into the right halfplane.
Adding an imaginary constant if necessary, we may assume that $f(h(0)) > 0$ is real. 
Then, by the Herglotz Theorem (see \cite[Ch. 3]{Hoff}, \cite[Ch. 1]{Duren}), we can represent $f(h(\zeta))$ as
\begin{equation}\label{eq:SI}
f(h(\zeta)) = \int_{\mathbb{T}} \frac{e^{i\theta}+z}{e^{i\theta}-z} d\mu(\theta),
\end{equation}
with $\mu$ positive.

Now since $\Re f(h(\zeta))$ is the pull back to $\DD$ of the roof function $u$,
which by Theorem \ref{thm:Martin} is a convex combination of at most two Martin functions,
$\mu$ consists of at most two atoms.

Thus, differentiating (\ref{eq:SI}): 
\begin{equation}\label{eq:rat}
f'(h(\zeta)) \cdot h'(\zeta) = R(\zeta),
\end{equation}
where $R(\zeta)$ is a rational function with either one or two double poles on $\p \DD$ (at the atoms of $\mu$).
Since $f'(h(\zeta))$ is a bounded analytic function in $\DD$ with $|f'(h(\zeta))|=1$ a.e. on $\p \Omega$, $f'(h(\zeta))$ is an inner function.
Moreover, $h'(\zeta)$ is an outer function, since $\Omega$ is Smirnov.

For a rational function such as $R(\zeta)$ the canonical factorization given by Theorem \ref{thm:factorization} reduces to:
$$R(\zeta) = B(\zeta) \cdot F(\zeta),$$ 
with $B$ a Blaschke product and $F$ a (rational) outer function.
(The singular factor $S(\zeta)$ is trivial, since $R(\zeta)$ has no essential singularities.)
By the uniqueness of the canonical factorization, $h'(\zeta)$ and $f'(h(\zeta))$ equal $F(\zeta)$ and $B(\zeta)$ respectively (up to multiplication by a unimodular constant).
Hence, $h'(\zeta) = F(\zeta)$ is rational, and $f'(h(\zeta)) = B(z)$ is a Blaschke product.
 \end{proof}

\section{The Case When Infinity is a Single Point on the Boundary}

In this Section, we characterize the halfplane as the only simply connected exceptional domain having infinity as a single point on the boundary.
This extends \cite[Prop. 6.1]{HHP2011} by removing the additional hypothesis $\p_x u > 0$ in \cite{HHP2011} on the roof function in $\Omega$.

\begin{thm}\label{halfplane}
Suppose $\Omega$ satisfies the hypothesis of Corollary \ref{cor:atoms}.
Then Case (i) implies that $\Omega$ is a halfplane.
\end{thm}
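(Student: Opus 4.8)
The plan is to feed Case (i) into Corollary \ref{cor:atoms} and the canonical factorization, and show that the inner (Blaschke) factor of the roof function's completion degenerates, forcing $f$ to be affine and hence $\Omega$ to be a halfplane. Geometrically, what happens is that in Case (i) the completion $f = u + iv$ itself already maps $\Omega$ \emph{conformally onto} the right halfplane; the factorization machinery is then what converts this, together with the Neumann condition $|f'| = 1$ on $\partial \Omega$, into the conclusion that the map is rigid.

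First I would record what Case (i) says about the Herglotz representation \eqref{eq:SI}. Since Theorem \ref{thm:Martin} forces the measure $\mu$ to consist of at most two atoms, and Case (i) corresponds to $h'$ having a single pole on $\partial \DD$, the measure $\mu$ must be a single atom. After a rotation of $\DD$ placing this atom at $\zeta = 1$, with mass $m > 0$, the representation \eqref{eq:SI} collapses to the single Möbius transformation
$$ f(h(\zeta)) = m \, \frac{1 + \zeta}{1 - \zeta}, $$
which is a conformal bijection of $\DD$ onto $\{\Re \zeta > 0\}$. Composing with $h^{-1}$ shows that $f$ is a conformal bijection of $\Omega$ onto the right halfplane, with the single boundary point at infinity of $\Omega$ corresponding to $\zeta = 1$.

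Next I would differentiate to identify $R(\zeta)$ from \eqref{eq:rat}. A direct computation gives
$$ R(\zeta) = f'(h(\zeta)) \, h'(\zeta) = \frac{2m}{(1-\zeta)^2}, $$
which has a single double pole at $\zeta = 1$ and, crucially, \emph{no zeros} in $\overline{\DD}$. In the canonical factorization $R(\zeta) = B(\zeta) F(\zeta)$ supplied by the proof of Corollary \ref{cor:atoms}, the Blaschke product $B$ is assembled solely from the zeros of $R$ in $\DD$; since there are none, $B$ reduces to a unimodular constant. As the corollary identifies $f'(h(\zeta))$ with the inner factor $B$, I conclude that $f'(h(\zeta))$ is constant, hence $f'$ is a unimodular constant throughout $\Omega$ (its modulus being $1$ by the Neumann condition). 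Integrating, $f(z) = \lambda z + c$ with $|\lambda| = 1$, so $\Omega = f^{-1}(\{\Re \zeta > 0\})$ is a halfplane.

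The argument has essentially no hard step once Corollary \ref{cor:atoms} is in hand, since the real analytic work was front-loaded into Theorem \ref{thm:Martin} (the two-atom bound) and the factorization. If anything, the only point requiring genuine care is verifying that the single-atom representation really does produce an $R$ with no zeros, so that the Blaschke factor truly disappears. This is precisely the feature that fails in Case (ii): there the sum of two Möbius pieces can acquire zeros inside $\DD$, yielding a nontrivial Blaschke product and thus a nonconstant $f'$, which is why the two-pole case requires the separate and more delicate analysis of Section 6 rather than collapsing immediately to a halfplane.
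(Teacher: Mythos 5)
Your proposal is correct and follows essentially the same route as the paper's own proof: reduce the Herglotz measure to a single atom at $\zeta=1$, obtain $f(h(\zeta)) = C\frac{1+\zeta}{1-\zeta}$, differentiate, observe that the right-hand side $\frac{2C}{(1-\zeta)^2}$ has no zeros so the Blaschke factor $f'(h(\zeta))$ is a unimodular constant, and conclude that $f$ is linear and $\Omega$ a halfplane. The extra observations you add (that $f$ maps $\Omega$ conformally onto the right halfplane, and the contrast with Case (ii)) are consistent with the paper but not needed.
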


\remark (i) We have not been able to drop the assumption that $\Omega$ is simply connected, but as mentioned in the introduction and Section 9,
M. Traizet has recently established the result under the assumption of finitely many boundary components. \cite{MT2013}.
We note that, in the simply connected case, this assumption is stronger than ours (since we allowed for infinitely many boundary components in Section 4).

(ii) We have not been able to prove a higher dimensional version of Theorem \ref{halfplane} (cf. Section 7).

\begin{proof}
Using the same notation $f$ and $h$ from the proof of Corollary \ref{cor:atoms},
$$f\left(h(\zeta)\right) = \int_{\partial \DD} \frac{e^{i\theta}+\zeta}{e^{i\theta}-\zeta} d\mu(\theta)$$
for some finite positive measure $\mu$ on $\partial \DD$.
By assumption, we are in the case when $h'$ has one pole, and according to the proof of Corollary \ref{cor:atoms} $\mu$ is an atomic measure with a single point mass.
Without loss of generality, we can place it at the point $e^{i\theta}=1$.

Thus,
\begin{equation}\label{eq:disk}
 f\left(h(\zeta)\right) = C \frac{1+\zeta}{1-\zeta}.
\end{equation}

Differentiating (\ref{eq:disk}), 
\begin{equation}\label{eq:fp}
 f'(h(\zeta)) h'(\zeta) = 2C \frac{ 1}{(1-\zeta)^2},
\end{equation}
and as asserted in the proof of Corollary \ref{cor:atoms},
$f'(h(\zeta))$ is the Blaschke factor of the right hand side, which has no zeros,
so $f'(h(\zeta))$ is a unimodular constant.
Therefore, $f$ is a linear function and $\Omega$ is a halfplane.
\end{proof}

\section{The Case When Infinity is a Double Point of the Boundary}\label{sec:nontrivial}

In this section we characterize the nontrivial example found in \cite{HHP2011}.
Suppose $\Omega$ is a simply connected domain and $\Omega$ is exceptional.
By Corollary \ref{cor:atoms}, recall that the derivative $h'(\zeta)$ of the conformal map from the disk onto $\Omega$ is a rational function with either one or two double poles on $\p \DD$.

\begin{thm}\label{cosh}
Suppose $\Omega$ satisfies the hypothesis of Corollary \ref{cor:atoms}.
Then Case (ii) implies that $\Omega$ is, up to similarity, 
the image of the strip $|\Im w| \le \pi/2$ under the conformal map $g(w) =  w + \sinh(w)$, 
while the analytic completion of the function $u(g(w))$ is the function $f(g(w)) = \cosh(w)$.
\end{thm}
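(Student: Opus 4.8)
The plan is to start from the structure already extracted in Corollary \ref{cor:atoms}: in Case (ii) the pulled-back roof function $f(h(\zeta))$ is the Herglotz integral of a measure with exactly two atoms, the derivative $h'(\zeta)$ is a rational \emph{outer} function with two double poles on $\partial\DD$ (at the atoms), and $f'(h(\zeta))$ is a finite Blaschke product. The first move is to exploit the freedom of precomposing $h$ with an automorphism of $\DD$ (which changes neither $\Omega$ nor $f$), together with a similarity of $\Omega$ (which only rescales and translates $f$), in order to put all the data in a canonical position.

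First I would place the two atoms at $\zeta = \pm 1$ using a disk automorphism, so that
$$f(h(\zeta)) = m_1\frac{1+\zeta}{1-\zeta} + m_2\frac{1-\zeta}{1+\zeta}, \qquad m_1,m_2>0.$$
Differentiating gives
$$R(\zeta) := f'(h(\zeta))\,h'(\zeta) = \frac{2m_1}{(1-\zeta)^2} - \frac{2m_2}{(1+\zeta)^2} = \frac{2\big[(m_1-m_2)(1+\zeta^2) + 2(m_1+m_2)\zeta\big]}{(1-\zeta^2)^2}.$$
The numerator is a quadratic of discriminant $16 m_1 m_2>0$ whose roots form a reciprocal pair of real numbers, so exactly one of them, call it $r$, lies in $\DD$; since $h'$ is outer (hence zero-free in $\DD$) this $r$ is the unique zero in $\DD$ of the Blaschke product $f'(h(\zeta))$, which is therefore of degree one. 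Because $r$ lies on the real axis --- the hyperbolic geodesic joining the two fixed atoms --- I can apply the remaining one-parameter family of automorphisms $\zeta\mapsto\frac{\zeta+t}{1+t\zeta}$ of $\DD$ fixing $\pm1$ to move $r$ to the origin. In the atoms-at-$\pm1$ normalization the zero of $R$ sits at the origin precisely when the constant term $m_1 - m_2$ vanishes, so this step is exactly the normalization $m_1 = m_2 =: m$.

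With equal masses the computation collapses: $R(\zeta) = 8m\zeta/(1-\zeta^2)^2$, the Blaschke factor is $f'(h(\zeta)) = \zeta$, and $h'(\zeta) = 8m/(1-\zeta^2)^2$. Integrating, using $\int (1-\zeta^2)^{-2}\,d\zeta = \tfrac14\big[\log\frac{1+\zeta}{1-\zeta} + \frac{2\zeta}{1-\zeta^2}\big]$, I get, up to an additive constant, $h(\zeta) = 2m\big[\log\frac{1+\zeta}{1-\zeta} + \frac{2\zeta}{1-\zeta^2}\big]$, while the Herglotz formula gives $f(h(\zeta)) = 2m\,\frac{1+\zeta^2}{1-\zeta^2}$. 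Setting $w = \log\frac{1+\zeta}{1-\zeta}$, which maps $\DD$ conformally onto the strip $|\Im w| < \pi/2$, a direct substitution identifies $h(\zeta) = 2m\,(w + \sinh w)$ and $f(h(\zeta)) = 2m\cosh w$. Absorbing the factor $2m$ and the additive constant into a similarity of $\Omega$ (a translation to remove the constant and a scaling by $1/2m$, under which the null-Dirichlet and constant-Neumann conditions on the roof function are preserved) yields exactly $h = g(w) = w + \sinh w$ and $f(g(w)) = \cosh w$, so $\Omega$ is the image of the strip under $g$, as claimed.

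The step I expect to be the crux is the automorphism normalization --- recognizing that the apparently two-parameter family of admissible data $(m_1,m_2)$ does not produce new domains but is collapsed to the single equal-mass case by reparametrizing $h$ with a hyperbolic automorphism fixing the atoms. One must check carefully that (i) the interior zero of $R$ is genuinely real and hence lies on the geodesic between $\pm1$, so that it can be sent to $0$; (ii) this reparametrization is a legitimate change of conformal map that leaves $\Omega$ and $f$ intact; and (iii) the leftover scalar and additive constant are precisely accounted for by a similarity, so that the normalized maps match the example of \cite{HHP2011} exactly. The remaining integration and the trigonometric identification are routine.
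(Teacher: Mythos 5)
Your proposal is correct, but it takes a genuinely different route from the paper's. The paper never computes $h$ explicitly: it shows that $f'$ and $\sqrt{(f-1)/(f+1)}$ are both conformal maps of $\Omega$ onto $\DD$ vanishing at the same point $z_0$ --- the degree count for $f'$ coming from the winding $\oint_{\partial\Omega} d\log f' = 2\pi i$, which uses the continuity of the unit tangent vector through the double boundary point at infinity --- and deduces that $f$ satisfies the differential equation $f' = \sqrt{(f-1)/(f+1)}$, which is then integrated by the substitution $f=\cosh w$, $z = w+\sinh w$. You instead stay entirely on the disk side: you normalize the two Herglotz atoms to $\pm 1$, observe that the numerator of $R(\zeta)=f'(h(\zeta))\,h'(\zeta)$ is a quadratic with a reciprocal pair of real roots so that the Blaschke factor has degree exactly one, use the hyperbolic automorphisms fixing $\pm1$ to equalize the masses (equivalently, to move the interior zero to the origin), and then read off $h'$ from the inner--outer factorization and integrate it in closed form. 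Your degree-one count of the Blaschke product replaces the paper's winding-number argument at infinity, and your explicit antiderivative of $(1-\zeta^2)^{-2}$ replaces the separation of variables; everything you invoke is already supplied by Corollary \ref{cor:atoms}, so you avoid the boundary-geometry input near infinity (Figure 1) on which the paper's Claim relies. Two small points to tidy: the canonical factorization determines $f'(h(\zeta))$ and $h'(\zeta)$ only up to reciprocal unimodular constants, so really $h'(\zeta) = e^{-i\alpha}\,8m/(1-\zeta^2)^2$; the resulting rotation of $\Omega$ is absorbed by the similarity, but this should be said explicitly. And you should record that $m_1,m_2>0$ strictly, which is precisely the content of Case (ii) (both double poles of $h'$, hence both atoms, are present).
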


\remark The exceptional domain $\Omega$ that is the image of the strip under the conformal map 
$\zeta \rightarrow \zeta + \sinh \zeta$  is precisely the exceptional domain found by the authors in \cite{HHP2011}.
Theorem \ref{cosh} together with Theorems \ref{disk} and \ref{halfplane} and Theorem \ref{thm:Martin} show that this $\Omega$ is, 
under an assumption on the topology essentially the only nontrivial example of an exceptional domain in $\RR^2$.
It turns out that some assumption on topology is necessary as there is yet a whole one-parameter family of non-similar
exceptional domains that have infinite genus (See Section 9).  However, under the assumption of finitely many boundary components, 
the example described in Theorem \ref{cosh} is the only nontrivial example as the previously mentioned recent work of M. Traizet shows \cite{MT2013}.

\begin{proof}

Using the same notation as in the proofs of Corollary \ref{cor:atoms} and Theorem \ref{halfplane},
we have that $h'(\zeta)$ is a rational function, and according to (\ref{eq:rat}) $f'(h(\zeta))$ is as well.
This justifies applying the argument principle to study $f(h(\zeta))$ and $f'(h(\zeta))$.
Namely, we will prove the following.

{\bf Claim:} The function $f$ solves a differential equation:
\begin{equation} \label{equation}
f' = \sqrt{\frac{f - 1}{f + 1}},\quad z \in \Omega,
\end{equation}
after simple normalizations described below.

Before proving this Claim we solve the differential equation to see that it gives the desired result.
Separating variables,
$$
\int \sqrt{\frac{f + 1}{f - 1}} df  = z + C.
$$
Make the substitution $f = \cosh(w), z = w + \sinh(w)$ (fixing the constant of integration $C=0$). Now using the conditions
$$
\Re f(z(w)) = 0 \, \, \text{for}\,\, z \in \p \Omega, \,\, \text{and }\,\,   \Re f(z(w)) > 0 \,\,\rm{for } \,\,\zeta \in \Omega,
$$
and the identity $\Re  \cosh(x + iy) = \cosh(x) \cos(y)$, we find that the pre-image of the domain in the $w$-plane is the strip $|\Im w| \le \pi/2$. 
Therefore, $\Omega$ can be described as the image of the strip under the map $z(w) = w + \sinh(w)$.

\end{proof}

\begin{proof}[Proof of Claim]
We will use the argument principle to show that both sides of the Equation (\ref{equation}) provide a conformal map from $\Omega$ to $\DD$.

Starting from the formula which relates the tangent vector $T(z)$ on $\p \Omega$ and the derivative of the analytic completion $f$ of $u(z)$, 
$$
T(z) = \frac{dz}{ds} = \frac{-i} {f'(z)}= \frac{1}{\sqrt{S'(z)}}, 
$$
we obtain from the continuity of $T(z)$ through the double point at infinity (see Figure 1), that
$$
\oint_{\p \Omega} d \log f'(z) =  2 \pi i.
$$
We conclude that $f'$ is a single-sheeted covering of the unit disk by the domain $\Omega$, and that it has only one zero, 
at some point $z_0 \in \Omega$.

We may assume that $f(z_0) = 1$.
If not, say $f(z_0) = a + i b$, $a > 0$, 
then one may subtract the constant $ib$ from $f$ (this just amounts to choosing a different harmonic conjugate for the same roof function),
so we have $f(z_0) = a$.  
Then one may simply replace $1$ with $a$ in the claim, and integrating the differential equation is done similarly resulting in a dilation of the original solution.

Consider now the function defined on $\Omega$, taking values in the unit disk $\mathbb{D}$
$$
g(z) := \sqrt{\frac{f(z) - 1}{f(z) +1 }}. 
$$
Then $g$ is also a univalent map from $\Omega$ into $\mathbb{D}$.
Indeed, by the argument principle, $\frac{f(z) - 1}{f(z) +1 }$ is a branched, two-sheeted covering
of the disk, since it maps each of the two boundary components shown in Figure~1 onto $\mathbb{T}$,
Moreover, the single branch point $z_0$ is mapped to the origin,
so that taking the square root gives a single-valued analytic function.

Also, $f'(z_0) = g(z_0) = 0$.  This uniquely determines the conformal map up to a unimodular constant,
which we may assume is $1$ (after a rotation), and we then arrive at the differential equation (\ref{equation}).

\end{proof}

\begin{figure}
\begin{center}
\includegraphics[width=3in]{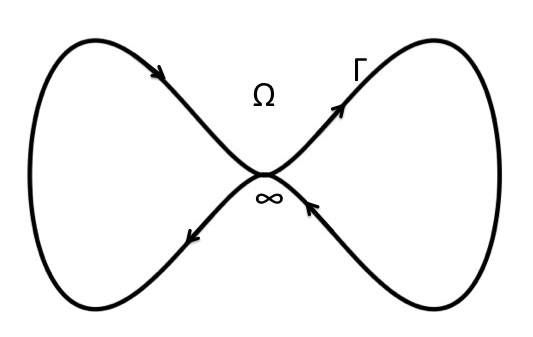}
\caption{Local geometry of the boundary $\Gamma = \p \Omega$ near infinity.}
\label{finger}
\end{center}
\end{figure}

\section{An Extension of Theorem \ref{disk} to Higher Dimensions}\label{sec:Reichel}


In this section, we notice that some results in Section \ref{sec:disk} extend to higher dimensions.

\begin{thm}\label{thm:Reichel}
Suppose $\Omega$ is an exceptional domain in $\mathbb{R}^n$ whose exterior is bounded and connected.
If $\partial \Omega$ is $C^{2,\alpha}$-smooth, $\alpha > 0$, then $\partial \Omega$ is a sphere.
\end{thm}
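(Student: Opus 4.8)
The plan is to reduce the overdetermined problem to the classical exterior overdetermined problem for the equilibrium (capacitary) potential and then invoke the symmetry result of W. Reichel \cite{Reichel97}. Throughout I take $n \ge 3$, the case $n = 2$ being precisely Theorem \ref{disk}. Write $\Omega = \RR^n \setminus \overline{G}$, where $G$ is the bounded connected complement, and let $u$ be the roof function, so that $u > 0$ is harmonic in $\Omega$, $u = 0$ on $\p \Omega$, and $\p u / \p n$ equals a nonzero constant on $\p \Omega$.

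The first step is to pin down the behavior of $u$ at infinity. I would place the origin inside $G$ and apply the Kelvin transform $\tilde u(y) = |y|^{2-n} u(y / |y|^2)$, which carries $u$ to a positive harmonic function on a punctured ball $\{ 0 < |y| < \rho \}$. By the B\^ocher-type description of positive harmonic functions near an isolated singularity \cite[Ch. 3]{ABR92}, one has $\tilde u(y) = c |y|^{2-n} + g(y)$ with $c \ge 0$ and $g$ harmonic across the origin; transforming back gives $u(x) = c + |x|^{2-n} g(x/|x|^2) \to c$ as $|x| \to \infty$. Hence $u$ is bounded and tends to a constant $\gamma := c \ge 0$. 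Were $\gamma = 0$, the maximum principle on the exterior domain (boundary values $0$, limit $0$ at infinity) would force $u \equiv 0$, contradicting positivity; so $\gamma > 0$.

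Next, set $v := \gamma - u$. Then $v$ is harmonic in $\Omega$, equals $\gamma$ on $\p\Omega$, decays to $0$ at infinity (at the rate $|x|^{2-n}$), and satisfies $0 < v < \gamma$ by the strong maximum principle. Thus $p := v / \gamma$ is precisely the equilibrium potential of the compact set $\overline G$. The Neumann condition on $u$ now reads $\p p / \p n = -\gamma^{-1}\, \p u / \p n = \const$ on $\p \Omega$, so the equilibrium potential of $\overline G$ has constant normal derivative along $\p G$. Since $\p \Omega$ is $C^{2,\alpha}$ and the complement $G$ is connected (hence $\p G$ is connected), this is exactly the overdetermined exterior configuration covered by Reichel's theorem \cite{Reichel97} with nonlinearity $f \equiv 0$, whose moving-plane argument forces $G$ to be a ball; equivalently, $\p \Omega$ is a sphere and $u$ is radial.

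The main point to verify carefully is that the decay of $v$ and $\nabla v$ at infinity, together with the regularity and connectedness of $\p \Omega$, match the precise hypotheses under which Reichel establishes radial symmetry for the exterior problem. The Kelvin-transform computation above delivers the expected asymptotics $v(x) = \kappa |x|^{2-n} + o(|x|^{2-n})$ with matching gradient decay, which is the natural range for \cite{Reichel97}. I do not expect the reduction itself to present difficulties; essentially all of the content is in confirming that the hypotheses of \cite{Reichel97} are met, after which the conclusion is immediate.
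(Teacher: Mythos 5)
Your argument is correct, and while it ends at the same place as the paper --- Reichel's symmetry theorem for exterior domains \cite{Reichel97} --- the reduction is genuinely different. The paper fixes $y\in\Omega$, applies Green's second identity to $u$ and the Newtonian kernel $v(x-y)=|x-y|^{2-n}$ on $\Omega\cap B_R\setminus B_\varepsilon$, and lets $R\to\infty$ and $\varepsilon\to 0$ to conclude that $u$ is represented by the single-layer potential of $\partial U$ with constant density; Reichel is then invoked in the single-layer-potential form. You instead skip the integral representation entirely: you identify the limit $\gamma=\lim_{|x|\to\infty}u>0$ via the Kelvin transform and B\^ocher's theorem, observe that $p=(\gamma-u)/\gamma$ is the equilibrium potential of $\overline G$ with constant normal derivative, and apply Reichel's overdetermined exterior boundary-value problem directly with $f\equiv 0$. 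Your route is more direct, and it is also more careful on one point where the paper is loose: the paper asserts $u(x)\approx|x|^{2-n}$ near infinity and drops the boundary term over $\partial B_R$, but a positive roof function with zero Dirichlet data on a compact boundary must tend to a \emph{positive} constant $\gamma$ (otherwise $u\equiv 0$ by the maximum principle), so the term $\int_{\partial B_R}u\,\partial_n v\,d\sigma$ contributes $\gamma$ times the total flux of the kernel rather than vanishing. The correct output of the Green's identity computation is therefore that $\gamma-u$ (not $u$ itself, which vanishes on $\partial U$ while a single-layer potential with positive density cannot) is a constant multiple of the single-layer potential --- which is exactly the normalization your decomposition $v=\gamma-u$ makes explicit. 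What the paper's route buys is the explicit link to single-layer potentials (connecting to \cite{Shahg} and \cite{EKS2002}); what yours buys is brevity and the cleaner match with the hypotheses of Reichel's Theorem 1. Two small caveats: the implication ``$G$ connected hence $\partial G$ connected'' is not valid in general (though here the connectedness of both $\Omega$ and $G$, which the theorem assumes, does preclude extra boundary components, and Reichel's hypotheses are on $G$ rather than on $\partial G$); and one should note, as you implicitly do, that the $C^{2,\alpha}$ boundary gives $u\in C^{2,\alpha}$ up to $\partial\Omega$ by elliptic regularity so that the moving-plane machinery applies.
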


\begin{proof}
 
Let $u$ be a roof function for $\Omega$,
and let $v(s)=\frac{1}{|s|^{n-2}}$ denote the Newtonian kernel.
Fix $y \in \Omega$ and take a small ball $B_{\varepsilon}$ centered at $y$.
Take also a large ball $B_R$ of radius $R$ that containes both $B_{\varepsilon}$ and the complement of $\Omega$.

Since $u(x)$ and $v(x-y)$ are harmonic in $\Omega \setminus B_\e$, Green's second identity gives
\begin{equation}
 \int_{\partial B_R + \partial \Omega - \partial B_{\varepsilon}} \left( v(x-y) \partial_n u(x) - u(x)\partial_n v(x-y) \right) d \sigma_x = 0.
\end{equation}

Letting $R \rightarrow \infty$, we can drop the integration over $\partial B_R$, 
since again by B\^ocher's Theorem \cite[Ch. 3]{ABR92}, near infinity $u(x) \approx |x|^{2-n}$.

Since, $u(x) = 0$ on $\partial \Omega$ and $\partial_n u(x) = 1$ on $\partial \Omega$,
\begin{equation}
 \int_{\partial \Omega}  v(x-y) d \sigma_x = \int_{\partial B_{\varepsilon}} \left( v(x-y) \partial_n u(x) - u(x)\partial_n v(x-y) \right) d \sigma_x.
\end{equation}

Let $U$ be the bounded domain such that $\mathbb{R}^n \setminus \overline{U} = \Omega$.
The outward normal for $\partial U$ is opposite to that of $\partial \Omega$, and since $v(x-y) = \frac{1}{\e^{n-2}}$ on $\partial B_{\varepsilon}$,

\begin{equation}
 \int_{\partial U} v(x-y) d \sigma_x = \int_{\partial B_{\varepsilon}} \left( - \tfrac{1}{\e^{n-2}} \partial_n u(x)  + u(x)\partial_n v(x-y) \right) d \sigma_x.
\end{equation}

For the first term on the right-hand-side, 
we have 
$$\int_{\partial B_{\varepsilon}} \tfrac{1}{\e^{n-2}} \pd_{n} u(x) d \sigma_x = \int_{B_{\e}} \Delta u(x) dV = 0.$$
So, 
$$\int_{\partial U} v(x-y) d \sigma_x = \int_{\partial B_{\varepsilon}} u(x)\partial_n v(x-y) d \sigma_x \rightarrow u(y),$$
as $\varepsilon \rightarrow 0$.
So, $u(y)$ is the single layer potential with charge density one on the surface $\partial U$.
That $U$ is a ball now follows from a theorem of W. Reichel \cite[Theorem 1]{Reichel97}.

\end{proof}

\remark Reichel's result holds for more general elliptic operators than the Laplacian.
In the setting of the Laplacian, J. L. Lewis and A. Vogel \cite{LV} characterized the sphere
in terms of its \emph{interior} Greeen's function under weaker regularity assumptions, namely, the boundary is assumed Lipschitz.
In that case, the Neumann condition can be assumed to hold almost everywhere on the boundary.
Thus, the hypothesis of Theorem \ref{thm:Reichel} could be weakened by checking that the same proof \cite{LV} works for the exterior case we are interested in.
Yet, we have chosen an easier and more transpoarent path to apply Reichel's result directly, 
even though it requires a stronger regularity on the boundary.


\section{Nonexistence of a Higher-dimensional Analog of the $\cosh(z)$ Example}

The authors in \cite{HHP2011} expressed a suspicion (see Remark 2.1 in \cite{HHP2011}) that there exist $n$-dimensional, 
rotationally-symmetric examples similar to the two-dimensional example $\{(x,y) \in \mathbb{R}^2: |y| < \frac{\pi}{2}+\cosh(x) \}$ 
that appeared in Section \ref{sec:nontrivial}.
We show that there does not exist an exceptional domain in $\mathbb{R}^4$ whose boundary is generated by 
rotation about the $x$-axis of the (two-dimensional) graph of an even function.

\begin{thm}\label{thm:nonexist}
 There does not exist a rotationally-symmetric exceptional domain $\Omega$ in $\mathbb{R}^4$ that contains its own axis of symmetry and 
whose boundary is obtained by rotating the (two-dimensional) graph of an even real-analytic function about the $x$-axis.
\end{thm}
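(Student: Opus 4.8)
The plan is to collapse the four-dimensional rotationally symmetric problem onto a planar problem in the meridian half-plane, where the Schwarz-function machinery of Sections \ref{sec:Schwarz}--\ref{sec:nontrivial} applies, and then to let the regularity of the roof function along the axis of symmetry supply an extra constraint that the purely two-dimensional problem does not carry. First I would introduce meridian coordinates $(x,r)$ with $r=|y|$ ($y\in\RR^3$ the rotated variable) and write the roof function as $u=u(x,r)$. Axial symmetry turns $\Delta u=0$ into $u_{xx}+u_{rr}+\tfrac{2}{r}u_r=0$ on the meridian domain $\Omega^\ast=\{\,0\le r<\phi(x)\,\}$, with $u=0$ and $\pd_n u=1$ on the meridian curve $\Gamma=\{\,r=\phi(x)\,\}$, and $u>0$ and smooth on the axis $r=0$ (this uses that $\Omega$ contains its axis and that $\phi$ is real-analytic). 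The decisive observation, and the reason the statement is special to $\RR^4$, is that $w:=r\,u$ is \emph{harmonic} in the ordinary planar sense, since $w_{xx}+w_{rr}=r\bigl(u_{xx}+u_{rr}+\tfrac{2}{r}u_r\bigr)=0$. Moreover $w=0$ on both $\Gamma$ and the axis, and since $u$ depends on $r$ only through $|y|=r$ it is even in $r$, so $w$ is odd in $r$; reflecting across the axis extends $w$ to a harmonic function, odd in $r$, on the symmetric simply connected domain $\widetilde\Omega=\{\,|r|<\phi(x)\,\}$ bounded by the two graphs $r=\pm\phi(x)$, whose boundary mirrors the geometry of the nontrivial example and passes through infinity twice.

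Next I would translate the overdetermined conditions into a Schwarz-function identity. Writing $z=x+ir$ and letting $W=w+i\widetilde w$ be the analytic completion of $w$ (its derivative $W'$ is single-valued), the Dirichlet condition $w=0$ on $\Gamma$ makes $\overline{W'}$ normal to $\Gamma$, while $u=w/r$ together with $\pd_n u=1$ gives $|W'(z)|=|\nabla w|=r=\Im z$ on $\Gamma$. The oddness of $w$ forces the reflection identity $(W')^\ast=-W'$, where $W^\ast(z):=\overline{W(\bar z)}$. Combining these with $\bar z=S(z)$ on $\Gamma$, where $S$ is the Schwarz function of $\Gamma$, and using facts (i), (ii) on $\sqrt{-S'(z)}$, yields the functional equation
\[
W'(z)\,W'\!\bigl(S(z)\bigr) \;=\; \tfrac14\,\bigl(z-S(z)\bigr)^2 ,
\]
equivalently $W'(z)=\tfrac{1}{2i}\bigl(z-S(z)\bigr)\sqrt{-S'(z)}$ up to sign. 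This is the exact analog of Proposition \ref{Schwarz}, with the constant Neumann data replaced by the variable factor $\tfrac{1}{2i}(z-S(z))$; in particular $S'$ continues analytically throughout $\widetilde\Omega$, which legitimizes the argument-principle reasoning of Section \ref{sec:nontrivial}.

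Finally I would make everything explicit and close the argument. Parametrizing $\widetilde\Omega$ by a conformal map from a strip (or disk) and using both symmetries — oddness across $\RR$ and evenness across $i\RR$ inherited from $\phi$ even — one obtains, exactly as in Corollary \ref{cor:atoms} and Theorem \ref{cosh}, explicit rational/exponential expressions for $W'$, for the map, and for $S$. The contradiction must then come from the axis: since $u=w/r$ is positive and smooth there, $w_x\equiv0$ and $w_r(x,0)=u(x,0)>0$ on $\RR\cap\widetilde\Omega$, so $(W')^\ast=-W'$ gives $W'(x)=-i\,u(x,0)$, purely imaginary of a definite sign, while the reflection symmetry of $\Gamma$ across $\RR$ forces $S(\bar z)=\overline{S(z)}$, hence $S(x)\in\RR$. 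Feeding this into the functional equation pits the sign of $W'(x)\,W'\!\bigl(S(x)\bigr)$ against the nonnegative $\tfrac14(x-S(x))^2$. The hard part, where all the work lies, is to control the global analytic continuations of $W'$ and $S$ — to locate $S(x)$ relative to $\widetilde\Omega$ and to pin down the sign of the purely imaginary value $W'\!\bigl(S(x)\bigr)$ — so that this clash (equivalently, a forced branch point of $\sqrt{-S'}$ inside $\widetilde\Omega$) becomes a genuine contradiction rather than a merely formal one. It is precisely here that the axis condition, which reconciles $|\nabla w|=\Im z$ on $\Gamma$ with first-order, positively-signed vanishing of $w$ on the axis, fails to admit any even real-analytic $\phi$, and thus rules out the four-dimensional analog.
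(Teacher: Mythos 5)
Your reduction is exactly the paper's: the $\RR^4$ trick that makes $y\,u(x,y)$ harmonic in the meridian plane, the reflection to the symmetric planar domain bounded by $\gamma\cup\bar\gamma$, and the identity $W(z)=\frac{z-S(z)}{2i}\sqrt{-S'(z)}$ for the derivative of the analytic completion of $yu$ (the paper's equation for $W:=(yu)_z$). Up to that point you have reconstructed the proof. The problem is the endgame. Your proposed contradiction rests on two things: the claim that the Schwarz function of the single curve $\Gamma=\{r=\phi(x)\}$ satisfies $S(\bar z)=\overline{S(z)}$ and hence takes real values on the axis, and an admittedly unexecuted analysis of "the global analytic continuations of $W'$ and $S$." The first claim is false: $\Gamma$ alone is not symmetric about the real axis (only $\Gamma\cup\bar\gamma$ is), and the two arcs carry two \emph{different} Schwarz functions $S_{\pm}$ related by $S_-(z)=\overline{S_+(\bar z)}$; already for the cylinder $r=c$ one has $S_+(z)=z-2ic$, so $S_+(x)=x-2ic\notin\RR$. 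With that, the sign clash you want between $W'(x)W'(S(x))$ and $\tfrac14(x-S(x))^2$ evaporates, and the step you yourself flag as "where all the work lies" is precisely the missing proof.

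The paper closes the argument differently, and more cheaply, by exploiting the evenness of $\phi$ — a hypothesis your sketch mentions but never actually uses. Evenness in $x$ together with the reflection in $y$ makes the planar domain $D$ invariant under $z\mapsto -z$, so the conformal map $f$ from the strip $\{|\Im\zeta|<\tfrac12\}$ normalized by $f(0)=0$, $\arg f'(0)=0$ is \emph{odd}. Pulling back the two boundary expressions $\frac{f(\zeta)-f(\zeta\mp i)}{2i}\sqrt{-f'(\zeta\mp i)/f'(\zeta)}$ for $W\circ f$ (one valid near each edge of the strip) and substituting $\zeta\mapsto-\zeta$ shows directly that $W(f(\zeta))+W(f(-\zeta))\equiv 0$, i.e.\ $W\circ f$ is odd. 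Hence $W(0)=0$; since $W=\frac{-i}{2}u+yu_z$ and $y=0$ at the origin, this forces $u(0,0)=0$, contradicting positivity of the roof function. No control of the continuation of $S$ into the domain and no sign bookkeeping on the axis is needed. To repair your write-up you would either have to carry out the continuation analysis you deferred (after correcting the reality claim for $S$ on the axis), or switch to this symmetry argument, which is where the hypothesis "even" actually enters.
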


\remark (i) Our proof will rely heavily on two tricks, one exploiting the assumption that $n=4$, 
and the other using the assumption that the generating curve is symmetric. 
However, we strongly suspect a more general non-existence of such examples in $\mathbb{R}^n$ for any $n>2$.
Therefore, we conjecture the following.
\begin{conj}
 For $n>2$, there does not exist an axially symmetric, exceptional domain in $\mathbb{R}^n$ that contains its own axis of symmetry.
\end{conj}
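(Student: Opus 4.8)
The plan is to reduce the conjecture to a two-dimensional overdetermined problem in the meridian half-plane and then to extend the Schwarz-function rigidity of Sections \ref{sec:Schwarz}--\ref{sec:nontrivial} to that setting. Writing $\rho$ for the distance to the axis, an axially symmetric roof function depends only on $(x,\rho)$ and solves, in $\{\rho>0\}$, the Weinstein operator
$$ L u := u_{xx} + u_{\rho\rho} + \frac{n-2}{\rho}\, u_\rho = 0 $$
on the meridian region $\Omega^+ = \{0 \le \rho < \phi(x)\}$, with $u=0$ and $|\nabla u| = 1$ on the profile curve $\Gamma = \{\rho = \phi(x)\}$, with $u>0$ inside, and with $u_\rho = 0$ on the axis $\{\rho = 0\}$ (since $\Omega$ contains its axis and $u$ is smooth there). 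Two facts survive in every dimension and I would record them first: Eremenko's estimate (Theorem \ref{thm:Martin}) gives $u = O(|x|)$ and represents $u$ as a convex combination of at most two Martin functions at infinity, so $\Omega$ has at most two ends; and the vanishing axial Neumann data makes $\{\rho=0\}$ behave as a second free boundary for $u$.

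The substitution $v = \rho^{(n-2)/2} u$ removes the first-order term and gives
$$ v_{xx} + v_{\rho\rho} - \frac{(n-2)(n-4)}{4\rho^2}\, v = 0. $$
When $n=4$ the potential vanishes, $v$ is genuinely harmonic in the plane, and the argument-principle and Schwarz-function machinery of Section \ref{sec:nontrivial} applies verbatim; this is precisely the mechanism behind Theorem \ref{thm:nonexist}. For general $n$ the inverse-square term does not vanish and holomorphy is lost, so the natural substitute is Vekua's theory of generalized analytic functions, for which $L$ is the model generalized-axially-symmetric potential operator, equipped with a conjugate Stokes stream function, generalized Cauchy-Riemann equations, and a \emph{generalized Schwarz function} $S$ of $\Gamma$ obeying analogues of facts (i)--(ii) of Section \ref{sec:Schwarz}. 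The goal at this stage is the pseudoanalytic analogue of Proposition \ref{Schwarz}: on $\Gamma$ one should again obtain $u_z = c\sqrt{-S'(z)}$, now as an identity between generalized analytic functions, and then propagate it off $\Gamma$ into $\Omega^+$.

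With such a generalized Schwarz function in hand, I would count sheets by the argument principle as in the proof of Theorem \ref{cosh}: continuity of the normal/tangent frame through the end(s) at infinity forces the pseudoanalytic analogue of $f'$ to be a one- or two-sheeted covering of the disk, and the two-Martin-function bound forces the associated boundary datum to have one or two double poles at the ideal boundary, exactly as in Corollary \ref{cor:atoms}. Separating variables then produces a candidate meridian profile, and here the axis enters decisively. The indicial analysis of the substituted equation near $\{\rho=0\}$ exhibits the two local behaviors $u\sim\text{const}$ (regular) and $u\sim\rho^{3-n}$ (replaced by $\log\rho$ when $n=3$), and the roof function, being smooth on the axis, must be the regular branch, so $u_\rho=0$ there. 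Requiring simultaneously the regular axial behavior, positivity of $u$, and the $\cosh$-type growth dictated by the two ends should over-determine the profile for every $n>2$; it is this triple constraint, rather than any feature special to a single dimension, that I expect to force nonexistence.

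The step I expect to be hardest is the global continuation of the generalized Schwarz function. For $n=4$ this is ordinary analytic continuation and is supplied by Proposition \ref{Schwarz}; for $n\ne 4$ the object solves a Vekua-type equation whose coefficient is singular on the axis, and one must rule out the creation of interior branch points at $\{\rho=0\}$ -- that is, show the roof function is a \emph{global} generalized potential rather than merely a local one near $\Gamma$. Proving this unobstructed continuation, the exact analogue of the concluding assertion of Proposition \ref{Schwarz}, is the crux. As a dimension-sensitive fallback I would attempt a truncated Rellich-Pohozaev identity on $\Omega \cap \{|x| < T\}$, in which the factors $(n-2)$ and $n$ appear with a definite sign; making that route work requires controlling the cross-sectional flux through $\{|x| = T\}$ as $T\to\infty$, for which the linear growth bound $u = O(|x|)$ from Theorem \ref{thm:Martin} is exactly the ingredient that is designed to apply in all dimensions.
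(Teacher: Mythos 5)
You are attempting to prove a statement that the paper itself leaves open: it is stated there explicitly as a conjecture, and the only thing the paper proves is the much weaker Theorem \ref{thm:nonexist} --- nonexistence in $\RR^4$ under the additional hypothesis that the profile curve is the graph of an \emph{even} real-analytic function. So there is no paper proof to match; the question is whether your outline closes the problem, and it does not. Your $n=4$ reduction does coincide with the paper's (the substitution $v=\rho^{(n-2)/2}u$, i.e., $yu$ harmonic in the meridian domain), but your description of the mechanism is wrong in a concrete way: the machinery of Section \ref{sec:nontrivial} does not apply ``verbatim'' to $yu$, because on $\Gamma$ the transformed function has zero Dirichlet data but Neumann data $\partial_n(yu)=y$, which is not constant --- $yu$ is not a roof function, and no argument-principle classification is run on it in the paper. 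What the paper actually does is combine the Schwarz-function identity $W(z)=\frac{z-S(z)}{2i}\sqrt{-S'(z)}$, inherited from Proposition \ref{Schwarz}, with a parity argument: evenness of the profile forces the conformal map from the strip to be odd, hence $W(f(\zeta))$ is odd, hence $W(0)=0$, which gives $u(0,0)=0$ and contradicts positivity. That parity trick is precisely why the paper needs the evenness hypothesis, which the conjecture does not grant; your plan offers no substitute for it, so even your $n=4$ case is not actually covered by what you cite.

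For $n\neq 4$ every decisive step of your program is conjectural, and you acknowledge as much: the ``generalized Schwarz function'' for the Weinstein operator, the pseudoanalytic analogue of Proposition \ref{Schwarz}, its unobstructed continuation past the axis, and an argument principle and factorization theory strong enough to replicate Corollary \ref{cor:atoms} and Theorem \ref{cosh} are all postulated rather than proved. Vekua--Bers theory does provide a similarity principle and local representations for generalized analytic functions, but the coefficient $\frac{(n-2)(n-4)}{4\rho^2}$ fails the integrability hypotheses near $\{\rho=0\}$, so even the basic similarity principle does not apply off the shelf exactly where you need it --- ruling out branching at the axis is, as you say, the crux, and it is left unresolved. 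The concluding step, that regular axial behavior, positivity, and the at-most-two-ends growth ``should over-determine the profile,'' is the conjecture restated rather than an argument; and the Rellich--Pohozaev fallback is only named, with the required flux control at infinity unexecuted (note also that of Theorem \ref{thm:Martin} only the bound $u=O(|x|)$ and the two-Martin-function statement survive in higher dimensions; the gradient bound there is proved only for $n=2$). In short: a sensible research program that correctly reproduces the known meridian reduction, but it mischaracterizes the paper's actual four-dimensional proof and leaves the general case exactly as open as the paper leaves it.
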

(ii) The assumption that the domain contains its axis of symmetry rules out the exteriors of balls and circular (or spherical) cylinders, 
respectively (which are clearly exceptional domains as was noted in \cite{HHP2011}).
Also, A. Petrosyan and K. Ramachandran pointed out to us that the nonconvex component of the exterior of a certain cone is also an exceptional domain.
In $\RR^4$, using the $x$-axis as the axis of rotation, the cone is the rotation of $ \{(x,y) : y^2 - x^2 = 0 \} $,
and the roof function in the meridian coordinates 
$x, y$ where $y$ is the distance to the $x$-axis in $\RR^4$, 
is $u(x,y) = \frac{y^2 - x^2}{y}$ for $y>0$.

\begin{proof}[Proof of Theorem \ref{thm:nonexist}]
Suppose that $\Omega$ is such a domain in $\mathbb{R}^4$.
Namely, the boundary $\partial \Omega$ is obtained from rotation of $\gamma:=\{ (x,y) \in \mathbb{R}^2 : y = g(x)\}$, with $g(-x) = g(x)$.
i.e., the boundary of $\Omega$ is given by 
$$\{(x_1,x_2,x_3,x_4) \in \mathbb{R}^4 : \sqrt{x_2^2+x_3^2+x_4^2} = g(x_1) \}.$$
Considering the boundary data, the rotational symmetry of the domain will be passed to the roof function, so that, abusing notation, we can write 
$$u(x_1,x_2,x_3,x_4) = u(x,y).$$
For clarity, we emphasize that the $x$-axis corresponds to the axis of symmetry and the $y$-coordinate gives the distance from the axis of symmetry.

For axially symmetric potentials $v$ in $\mathbb{R}^n$ the cylindrical reduction of Laplace's equation is:
$$\Delta_{(x,y)} v + \frac{(n-2)v_y}{y} = 0,$$
where $x=x_1$ and $y = \sqrt{x_2^2+...+x_n^2}$.
Moreover, in the case we are considering, when $n=4$, $u$ satisfies the equation $\Delta u + \frac{2 u_y}{y} = 0$,
if and only if $y u(x,y)$ is a harmonic function of two variables $x$ and $y$.
Indeed, 
$$\Delta (y u) = y \Delta u + 2 \nabla u \cdot \nabla y + u \Delta y = y \Delta u + 2 u_y.$$
(The trick that reduces axially symmetric potentials in $\mathbb{R}^4$ to harmonic functions in the meridian plane is well known, cf. \cite{Khav91} and \cite{Karp92}.)

Since $y u(x,y)$ is then harmonic in the unbounded two-dimensional domain $D$ bounded by $\gamma$ and its reflection (which we denote by $\bar{\gamma}$) with respect to the $x$-axis,
this implies $\frac{\partial}{\partial z} \left(y u(x,y) \right)$ is analytic in the domain $D$, where as usual $z = x+iy$.
The Cauchy data (originally posed in $\mathbb{R}^4$) imply that $u_z = \frac{1}{2}(u_x-iu_y)$ 
coincides with $\sqrt{-S'(z)}$ on $\gamma$ and $\bar{\gamma}$.
This implies that the analytic function 
\begin{equation}\label{eqn:analytic}
 W(z) := \left(y u \right)_z = \frac{-i}{2}u + y u_z
\end{equation}
coincides with $\frac{z-S(z)}{2i}\sqrt{-S'(z)}$ on $\gamma$ and $\bar{\gamma}$.
Since $\frac{z-S(z)}{2i}\sqrt{-S'(z)}$ is analytic, 
this actually gives a formula for $W(z)$ valid not only on $\gamma$ and $\bar{\gamma}$:
\begin{equation}\label{eqn:Schwarzformula}
 W(z) = \frac{z-S(z)}{2i}\sqrt{-S'(z)}.
\end{equation}
We note that (\ref{eqn:Schwarzformula}) can be used to analytically continue $S(z)$ to all of $D$, but this is not needed in our proof.

Let $f(\zeta)$ be the conformal map from the strip $\Sigma:= \{|\Im \zeta|<\frac{1}{2} \}$ to $D$ such that $f(0) = 0$ and $\arg \{f'(0)\} = 0$.
The two-fold symmetry of $D$ implies that $f(\zeta)$ is an odd function.
Indeed, otherwise $h(\zeta) = -f(-\zeta)$ gives another conformal map from the strip $\Sigma$ to $D$.
But, $h(0) = -f(0) = 0$ and $h'(0) = f'(0)$ implies $h = f$, by the uniqueness of the conformal map (up to choice of $f(0)$ and argument of $f'(0)$).

The Schwarz functions $S_t$, $S_b$ of the top and bottom edges of the strip $\Sigma$ are $S_t(\zeta) = \zeta - i$, and $S_b(\zeta) = \zeta + i$.  
In terms of the conformal map $f(\zeta)$, the pull-back to the $\zeta$-plane of the Schwarz functions $S_{+}$ and $S_-$ of $\gamma$ and $\bar{\gamma}$ (resp.) satisfy (see \cite[Ch. 8, Eq. $8.7$]{Davis74})

\begin{equation}
 S_{\pm}(f(\zeta)) = f(\zeta \mp i), \text{ and}
\end{equation} 
\begin{equation}
 S_{\pm}'(f(\zeta)) = \frac{f'(\zeta \mp i)}{f'(\zeta)}.
\end{equation}

Substituting these into (\ref{eqn:Schwarzformula}),
we obtain two expressions for the pullback of $W(z)$ to the strip $\Sigma$:

\begin{equation}\label{eqn:match}
\frac{f(\zeta)-f(\zeta \mp i)}{2i}\sqrt{-\frac{f'(\zeta \mp i)}{f'(\zeta)}}
\end{equation}

Even though $W(f(\zeta))$ is analytic throughout $\Sigma$, we caution that
these two expressions (one expression for ``$+$'' and one for ``$-$'') may only be valid near the bottom and top sides (respectively) of the strip $\Sigma$.

\smallskip

\noindent {\bf Claim:} The function $W(f(\zeta))$ is odd.

\smallskip

Before proving the Claim, let us see how it is used to finish the proof of the Theorem.
The fact that $W(f(\zeta))$ is odd implies $W(0) = W(f(0)) = 0$.
By (\ref{eqn:analytic}) we then have that $\frac{-i}{2}u + y u_z$ vanishes at $z=0$, which implies that $u(0,0)=0$. 
This contradicts the positivity of $u$.

\begin{proof}[Proof of Claim]
 We wish to show that $V(\zeta) = W(f(\zeta)) + W(f(-\zeta))$ vanishes identically.
Use each of the expressions in (\ref{eqn:match}) above to represent $W(f(\zeta))$ and $W(f(-\zeta))$, respectively.
\begin{equation}\label{eqn:odd}
V(\zeta) = \frac{f(\zeta)-f(\zeta - i)}{2i}\sqrt{-\frac{f'(\zeta - i)}{f'(\zeta)}} + \frac{f(-\zeta)-f(-\zeta + i)}{2i}\sqrt{-\frac{f'(-\zeta + i)}{f'(-\zeta)}}
\end{equation}

We show that this formula vanishes where it is valid, which then implies that $V(\zeta)$ vanishes identically throughout $\Sigma$.
For this, we use the fact that $f$ is odd and consequently $f'$ is even.
\begin{equation}\label{eqn:odd2}
V(\zeta) = \frac{f(\zeta)-f(\zeta - i)}{2i}\sqrt{-\frac{f'(\zeta - i)}{f'(\zeta)}} + \frac{-f(\zeta)+f(\zeta - i)}{2i}\sqrt{-\frac{f'(\zeta - i)}{f'(\zeta)}} = 0.
\end{equation}
This establishes the Claim.
\end{proof}
\end{proof}



\section{Concluding Remarks and Main Conjecture}

{\bf 1.} It is tempting to conjecture that the three examples in the plane studied above are the only exceptional domains in the plane as suggested in \cite{HHP2011}.

However, there is a remarkable family of infinitely-connected exceptional domains.
These were discovered as solutions to a fluid dynamics problem by Baker, Saffman, and Sheffield in 1976 \cite{BSS76}.
See also \cite{Crowdy} for a more detailed account.
The original problem there was to find hollow vortex equilibria with an infinite periodic array of vortices, i.e., ``spinning bubbles'' amid a stationary flow of ideal fluid.
The domain occupied by fluid turns out to be an exceptional domain with an infinite periodic array of holes,
and the roof function is a \emph{stream function} of the fluid flow, see Figure \ref{fig:periodic}.
The constant Dirichlet condition corresponds to the requirement that the boundary of each hollow vortex is a stream line,
and the constant Neumann condition corresponds to the requirement that the fluid pressure should be balanced at the interface by the pressure inside each bubble
which is assumed constant.  The latter correspondence is more subtle; 
in order to have constant pressure along a stream line, the fluid velocity (which equals the normal derivative of stream function) should be constant
according to Bernoulli's law.

\begin{figure}
\begin{center}
\includegraphics[width=3in]{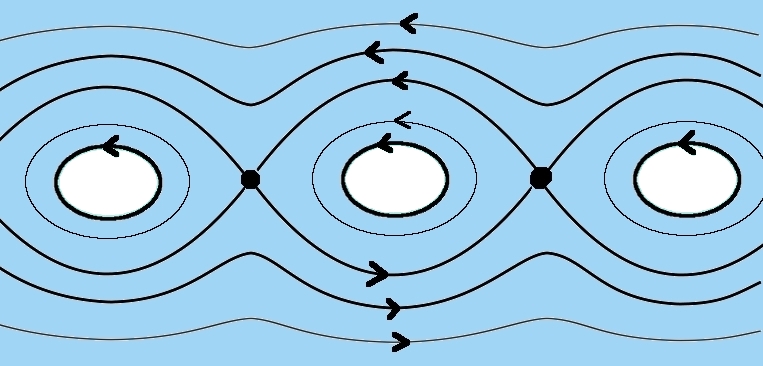}
\caption{An infinite-genus exceptional domain that also provides a hollow vortex equilibrium.  Level curves of the roof function are stream lines.  
The shape of the bubbles ensures that the pressure dictated by Bernoulli's law is constant at the fluid-bubble interface.}
\label{fig:periodic}
\end{center}
\end{figure}

\begin{figure}
\begin{center}
\includegraphics[width=3in]{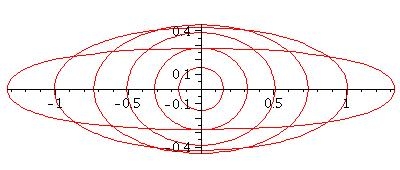}
\caption{There is actually a whole one parameter family of different bubble shapes.  
As noticed in \cite{MT2013}, each of the three previously known examples can be recovered as different scaling limits of this family.
In that sense, this family includes all known examples.}
\label{fig:periodic}
\end{center}
\end{figure}

This infinite genus example leads us to add to the conjecture the assumption that the domain has finite genus.

\begin{conj} \label{c9}
The only finite genus exceptional domains in $\mathbb{R}^2$ are the exterior of the unit disk, the halfplane, and the domain described in Theorem \ref{cosh}.
\end{conj}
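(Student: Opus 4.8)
The plan is to organize the classification around a single dichotomy --- whether or not the point at infinity lies on $\partial\Omega$ --- and to reduce each branch to a situation already settled in the preceding sections. By Theorem \ref{thm:Martin} the roof function $u$ is a convex combination of at most two Martin functions at infinity and satisfies $\nabla u = O(1)$; in particular there are at most two ends along which $\Omega$ reaches infinity. If infinity is \emph{not} on $\partial\Omega$, then $\CC\setminus\Omega$ is bounded, and the a priori regularity furnished by the remark following Theorem \ref{thm:Martin} places us in the hypotheses of Theorem \ref{thm:gust}; hence $\Omega$ is the exterior of a disk. (This already uses the finite-genus assumption in an essential way, via Gustafsson.) The entire difficulty is therefore concentrated in the case where infinity lies on $\partial\Omega$, where I would aim to prove that a finite-genus exceptional domain must in fact be \emph{simply connected}, after which Theorems \ref{halfplane} and \ref{cosh} identify $\Omega$ as the halfplane (Case (i)) or the $\sinh$ example (Case (ii)).

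The central object is $f' = 2u_z$, the derivative of the analytic completion of $u$: it is single-valued and analytic on $\Omega$, bounded by Theorem \ref{thm:Martin}, and has unimodular boundary values by the Dirichlet and Neumann data, while by Proposition \ref{Schwarz} the Schwarz function satisfies $S'(z) = -f'(z)^2$ globally on $\Omega$. To rule out bounded boundary components I would exploit the arclength null-quadrature structure already isolated in the proof of Theorem \ref{thm:gust}, namely $i f'(z)\,dz = ds$ on $\partial\Omega$, together with the period relations for $S$. On any bounded boundary component $\Gamma_j$ one has $\oint_{\Gamma_j} S'(z)\,dz = \oint_{\Gamma_j} d\bar z = 0$, so $S$ is single-valued around each hole; combined with the identity (\ref{eq:alnqd}) tested against rational functions with poles placed inside the holes, these relations should over-determine the configuration of holes and force their absence. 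Equivalently, one may try to show directly that $-S' = f'^2$ is a single-valued bounded analytic function with unimodular boundary values on a finitely connected domain, which for a genuinely multiply connected $\Omega$ forces $f'^2$ to be a proper map onto $\DD$ of some finite degree; the two ends at infinity (where $f'$ tends to unimodular constants rather than taking boundary values) then have to be reconciled with properness, which I expect to be impossible unless the holes are absent.

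The principal obstacle is precisely the presence of infinity on the boundary. In the compact-complement case the contour in (\ref{eq:alnqd}) can be deformed to infinity, where $f' = O(|z|^{-1})$ forces every quadrature integral to vanish; with infinity \emph{on} $\partial\Omega$ the function $f'$ does not decay but instead approaches unimodular constants along the at-most-two ends, so the boundary integrals diverge and the clean contour-deformation argument collapses. Making sense of these divergent contributions --- presumably by a renormalization at the two ends, or by transferring the problem to the Schottky double of $\Omega$ and invoking Abel's theorem and the period relations there --- is where the real work lies; this is in effect the arclength analog, with infinity on the boundary and arbitrary finite connectivity, of the classification of null quadrature domains due to Sakai and Gustafsson, and it is exactly such a classification that the recent minimal-surface methods of Traizet \cite{MT2013} succeed in supplying.

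Finally, a subtlety of formulation must be addressed: as written the conjecture admits the weak, non-Smirnov \emph{exterior pseudocircles} of Section \ref{sec:Smirnov}, which are simply connected (hence finite genus), have bounded complement, and carry a roof function --- the Green's function with pole at infinity --- satisfying the boundary conditions almost everywhere, yet are not exteriors of disks. To obtain the stated conclusion one must therefore either add the Smirnov hypothesis (as in Theorems \ref{disk}, \ref{halfplane}, and \ref{cosh}) or, via Lemma \ref{lemma:reg} and Corollary \ref{cor:reg}, restrict to classical roof functions --- under which the non-smooth pseudocircle boundaries are excluded and the local real-analyticity needed for the Schwarz-function argument of Proposition \ref{Schwarz} is guaranteed.
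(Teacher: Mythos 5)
You should first note what the target statement is: it is a \emph{conjecture}, and the paper deliberately offers no proof of it --- indeed the remark following Theorem \ref{halfplane} states explicitly that the authors could not remove the simple-connectivity hypothesis, and the introduction attributes the only known resolution (for finitely many boundary components) to Traizet's minimal-surface methods \cite{MT2013}. Judged as a proof, your proposal has exactly one decisive gap, and it coincides with the entire open content of the conjecture: everything you establish --- the dichotomy on whether infinity lies on $\partial\Omega$, the disk case via Theorem \ref{thm:gust}, and the reduction of the simply connected boundary-point cases to Theorems \ref{halfplane} and \ref{cosh} through Corollary \ref{cor:atoms} --- merely reassembles the paper's Sections 3--6. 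The one new step your argument needs, namely that a finite-genus exceptional domain with infinity on its boundary has no bounded boundary components (equivalently, is simply connected), is precisely the step you leave unproved, acknowledging it as ``where the real work lies'' and gesturing at renormalization at the ends or at the Schottky double and Abel's theorem. These gestures do not constitute an argument: the period relation $\oint_{\Gamma_j} S'(z)\,dz = \oint_{\Gamma_j} d\bar z = 0$ is an automatic consequence of $S=\bar z$ on $\Gamma_j$ and carries no constraint on the configuration of holes, and the quadrature identity (\ref{eq:alnqd}) tested against rational functions with poles in the holes is unavailable in this branch for the very reason you identify --- with infinity on $\partial\Omega$ the function $f'$ tends to unimodular constants along the ends rather than decaying, so the contour deformation that produced (\ref{eq:alnqd}) collapses. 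The paper's own remark that a function-theoretic proof of Traizet's Theorem 13 would yield a new proof of a theorem of Meeks and Wolf on minimal surfaces should signal that the step you are missing is a substantial open problem, not a verification to be filled in.

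Two secondary points. First, in the bounded-complement branch your appeal to ``the a priori regularity furnished by the remark following Theorem \ref{thm:Martin}'' is circular for multiply connected domains: Corollary \ref{cor:atoms} is proved only for \emph{simply connected} Smirnov domains, so the piecewise-$C^1$ boundary hypothesis of Theorem \ref{thm:gust} remains an additional assumption in the finite-genus case, not a consequence of Theorem \ref{thm:Martin}. (Theorem \ref{disk} avoids this but requires the complement to be connected.) Second, your alternative suggestion that $-S'=f'^2$ would be a proper finite-degree map onto $\DD$ for a genuinely multiply connected $\Omega$ does not follow from boundedness plus unimodular boundary values almost everywhere; that makes $f'$ inner-like, but properness requires control up to the boundary that is exactly what is lacking at the two ends. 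Your final paragraph, by contrast, is a correct and relevant caveat: as stated for weak solutions the conjecture must exclude the exterior pseudocircles of Section \ref{sec:Smirnov}, either by a Smirnov hypothesis as in Theorems \ref{disk}, \ref{halfplane}, \ref{cosh}, or by restricting to classical roof functions via Lemma \ref{lemma:reg} and Corollary \ref{cor:reg}.
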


\remark As mentioned in the introduction, Martin Traizet \cite{MT2013} recently announced a classification of exceptional domains.
His results confirm our conjecture for domains having finitely many boundary components
and also show that the above infinite genus example is the only periodic exceptional domain for which the quotient by the period has finitely many boundary components.
His methods use a remarkable nontrivial correspondence to minimal surfaces,
perturbing an exceptional domain by harmonically mapping it to another domain in such a way that the graph of the new height function
(which pulls back to the roof function in the original domain) satisfies the minimal surface equation.  
A miraculous (and crucial to his approach) by-product is that,
whereas the graph of the roof function meets its boundary at a $45$-degree angle, the minimal graph
meets its boundary vertically so that gluing it to its own reflection over the $xy$-plane results in a smooth minimal surface (without boundary!) embedded in $\RR^3$.

{\bf 2.} Regarding the higher-dimensional case, we conjecture the following extension of Theorem \ref{halfplane} to higher dimensions.
\begin{conj} \label{c9}
Suppose $\Omega$ is an exceptional domain in $\RR^n$ that 
is homeomorphic to a halfspace.  Then $\Omega$ is a halfspace.
\end{conj}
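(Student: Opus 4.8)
The plan is to mimic, in a dimension-free way, the mechanism behind Theorem~\ref{halfplane}: there the single boundary sheet through infinity forced the analytic completion of the roof function to be linear. In $\RR^n$ there is no conformal map or Herglotz representation to exploit, so I would instead aim directly at the rigidity statement $\nabla^2 u \equiv 0$, which together with $u>0$ and $u=0$ on $\partial\Omega$ immediately forces $\Omega=\{a\cdot x+b>0\}$ to be a halfspace. The natural quantity to track is $P:=|\nabla u|^2$, which for harmonic $u$ satisfies $\Delta P = 2|\nabla^2 u|^2\ge 0$ and equals $1$ on $\partial\Omega$ by the Neumann condition; the goal then becomes showing that this subharmonic function is identically $1$.

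First I would secure the a priori behavior at infinity. The Harnack argument of Theorem~\ref{thm:Martin} leading to (\ref{f}) is already dimension-independent and yields $u(x)=O(|x|)$. The gradient bound $|\nabla u|\le 1$ is the first genuinely higher-dimensional difficulty: the auxiliary-function computation in the proof of Theorem~\ref{thm:Martin} used that $\log|\nabla u|$ is harmonic, which is special to the plane. In $\RR^n$ one must replace it by the known subharmonicity of the powers $|\nabla u|^p$, $p\ge (n-2)/(n-1)$, attached to a harmonic function, run against a barrier adapted to the distance to $\partial\Omega$.

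Next I would use the topological hypothesis. Since $\Omega$ is homeomorphic to a halfspace it is contractible with a single connected boundary sheet, which, unlike the exterior of a ball treated in Theorem~\ref{disk} (whose boundary is a sphere), forces $\partial\Omega$ to be asymptotically a single hyperplane and $u$ to be asymptotic to the affine distance to it, so that $|\nabla u|\to 1$ at infinity rather than $\to 0$. This is precisely the feature distinguishing Case~(i) from the compact-boundary case, and it rules out the ball-exterior profile. With $P\le 1$ subharmonic, $P=1$ on $\partial\Omega$, and $P\to1$ at infinity, I would then run the method of moving planes (Alexandrov--Serrin, in the form adapted to unbounded epigraph domains by Berestycki--Caffarelli--Nirenberg): reflect across hyperplanes transverse to the asymptotic direction, start the process using Serrin's boundary-point lemma together with \emph{both} the Dirichlet and Neumann data, and propagate it to obtain monotonicity of $u$ in every direction entering $\Omega$ from the boundary. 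Combining the resulting symmetries forces $u$ to depend on a single linear coordinate, i.e. $u=x_n$ after normalization and $\Omega=\{x_n>0\}$.

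The hard part will be the unboundedness, which is exactly why the planar proof via conformal maps does not transcribe and why this remains a conjecture. Controlling the reflected solution at infinity -- ensuring the moving-plane family can be initiated from infinity and cannot halt at an interior tangency without producing symmetry -- is delicate, and in full generality the analogous rigidity is known to be dimension sensitive (Bombieri--De Giorgi--Giusti-type minimal graphs obstruct naive symmetrization in high dimensions). An alternative I would keep in reserve is a Weinberger-type $P$-function/Pohozaev identity, which dispatches the bounded Serrin problem in one stroke; here it must be integrated against the linear growth of $u$ and faces boundary terms at infinity that a priori diverge, so making those converge, again via the single-sheet asymptotics, is the crux. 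Finally, it is worth recording the potential-theoretic shadow of the problem: as in Theorem~\ref{thm:gust}, the overdetermined conditions make $\partial\Omega$ a null quadrature surface for surface-area integration, so a sufficiently complete higher-dimensional analog of Gustafsson's classification would settle the conjecture directly.
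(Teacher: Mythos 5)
The statement you are proving is Conjecture \ref{c9} in item {\bf 2} of the concluding section: the paper offers \emph{no} proof of it and explicitly leaves it open, so there is nothing in the source to compare your argument against except the two-dimensional Theorem \ref{halfplane}, whose proof is entirely conformal (Herglotz representation plus the inner--outer factorization) and, as the authors note in Remark (ii) after that theorem, does not transcribe to $\RR^n$. Your proposal is therefore correctly framed as a program rather than a proof, but it contains a gap serious enough that it does not settle the conjecture.

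The critical unjustified step is the passage from the topological hypothesis to asymptotic geometry: you assert that because $\Omega$ is homeomorphic to a halfspace, $\partial\Omega$ is ``asymptotically a single hyperplane'' and $u$ is asymptotic to the affine distance to it, so that $|\nabla u|\to 1$ at infinity. A homeomorphism carries no metric information, and the asymptotics of a Martin function at infinity are governed by the geometry of $\Omega$ near infinity, not its topology: for a paraboloid-type region or a cone-type region (compare the cone example in Remark (ii) after Theorem \ref{thm:nonexist}, where the roof function is $({y^2-x^2})/{y}$) the Martin function has completely different growth than the affine function $x_n$. Everything downstream --- the initiation of the moving planes from infinity, the convergence of the boundary terms in a Weinberger/Pohozaev identity, and the conclusion $P\equiv 1$ from $P\le 1$, $P=1$ on $\partial\Omega$, $P\to 1$ at infinity --- rests on this unproved asymptotic flatness. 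Two further steps are also only sketched: the gradient bound $|\nabla u|\le 1$ in $\RR^n$ (the paper proves $\nabla u = O(1)$ only for $n=2$, via an argument using that $\log|\nabla u|$ is harmonic in the plane; subharmonicity of $|\nabla u|^p$ gives a maximum principle only with control at infinity, which is again the missing ingredient), and the applicability of the Berestycki--Caffarelli--Nirenberg moving-plane machinery, which in the known literature requires structural hypotheses on the epigraph (global Lipschitz bounds or coercivity) that you would first have to derive. In short: the skeleton ($P$-function, moving planes, Martin-function asymptotics) is a reasonable plan of attack, but the load-bearing step is exactly the one you have not supplied, which is consistent with the statement remaining a conjecture in the paper.
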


{\bf 3.} The connection to the Schwarz function in Section \ref{sec:Schwarz} reveals that exceptional domains are \emph{arclength null-quadrature domains}.
That is, for any function $f$, say analytic in $\Omega$, continous in $\ol{\Omega}$, integrable over the boundary, 
and decaying sufficiently at infinity, we have $\int_{\p \Omega} f ds = 0$.
Indeed, $\int_{\p \Omega} f ds = \int_{\p \Omega} f(z) \frac{1}{T(z)}T(z)ds = \int_{\p \Omega} f(z) \sqrt{S'(z)} dz,$
where $T(z)$ is the complex unit tangent vector (see Section \ref{sec:Schwarz}), 
and now this integral vanishes as long as the integrand decays sufficiently at infinity.
Null-quadrature were previously studied in the case of area measure.
They were characterized in the plane by M. Sakai \cite{Sakai}.
Our current study can be seen as a step toward characterizing null-quadrature domains for arclength.

{\bf 4.} Other interesting connections involve differentials on Riemann surfaces.
The study of Gustafsson \cite{Gustafsson87} used half-order differentials on the Schottky double of an arclength quadrature domain.
From a different point of view, the boundary of an exceptional domain is a trajectory of the positive quadratic differential $-(df)^2$,
where $f(z)$ is the analytic completion of the roof function.

{\bf 5.} The differential equation \eqref{equation} can be solved by a more general substitution using Jacobi elliptic functions \cite[p.~567, \S 16]{AS}: $f(\zeta, k) \equiv \cos(\theta) \cn(\zeta, k) + \sin(\theta) \sn(\zeta, k)$ and $z(\zeta) = \phi(\zeta) + \cos(\theta)\sn(\zeta, k) - \sin(\theta)\cn(\zeta, k)$, where $\phi(\zeta) = \int^{\zeta} \dn(\xi, k) d\xi$ and $\theta$ is an arbitrary phase, $\theta \in [0, 2\pi]$. 

For a given value of the elliptic modulus $k \in [0, 1]$, we define the corresponding domain $\mathbb{F}$ through its fundamental periods $T_1 (k) = 4F(\pi/2, \sqrt{1-k^2})$ and $T_2 (k) = 4 F(\pi/2, k)$, where $F(\pi/2, k) = K(k)$ is the complete elliptic integral of the first kind \cite[p. 590, \S 17.3]{AS}:
$$
K(k) \equiv \int_0^{\pi/2}\frac{1}{\sqrt{1 - k^2\sin^2(\theta)}}d \theta
$$ 
It diverges for $k=1$ and equals $\pi/2$ for $k=0$. 

Then it is straightforward to check that \eqref{equation} is satisfied by $f(z)$, due to the identity \cite[p. 573, \S 16.9]{AS}:
$$
1 = [-\sn(z) \cos(\theta) + \cn(z) \sin(\theta)]^2 + [\cn(z) \cos(\theta) +\sn(z) \sin(\theta)]^2.
$$


Let $\gamma$ be the pre-image of $\p \Omega$ under $z(\zeta)$: it consists of two pieces $\gamma_{\pm}, \gamma_{-} = - \gamma_{+}$, dividing the  fundamental domain 
$\mathbb{F}$ into three sub-domains. Denote the component which contains the origin by $D_0$, then since $f(0) = 1$, we conclude that $\Re f(z) > 0$ for $z \in D_0 \setminus \gamma_{\pm}$, 
and we have proven the following result.

\begin{prop}\label{domains}
The exceptional domain $\Omega$ is the image of the domain $D_0(k)$ under the map $z(\zeta) = \phi(\zeta) + \cos(\theta)\sn(\zeta, k) - \sin(\theta)\cn(\zeta, k)$.
\end{prop}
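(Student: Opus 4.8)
The plan is to read the Proposition off the explicit elliptic-function solution of \eqref{equation} constructed above; most of the structural input (the level-set decomposition of $\mathbb{F}$ into $\gamma_\pm$ and the three subdomains, and the positivity $\Re f > 0$ on $D_0$) is already in place, so the genuine work is to confirm that the ansatz solves the differential equation and that $z(\zeta)$ parametrizes $\Omega$ univalently.

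First I would verify the ansatz. Differentiating $f(\zeta) = \cos\theta\,\cn(\zeta,k) + \sin\theta\,\sn(\zeta,k)$ with the rules $\sn' = \cn\,\dn$, $\cn' = -\sn\,\dn$, $\dn' = -k^2\sn\,\cn$ gives $f'(\zeta) = \dn(\zeta)(\sin\theta\,\cn(\zeta) - \cos\theta\,\sn(\zeta))$, and likewise $z'(\zeta) = \dn(\zeta)(1 + \cos\theta\,\cn(\zeta) + \sin\theta\,\sn(\zeta)) = \dn(\zeta)(1 + f(\zeta))$. Hence $df/dz = (\sin\theta\,\cn - \cos\theta\,\sn)/(1+f)$, and squaring the numerator by the quoted identity $1 = (\cn\sin\theta - \sn\cos\theta)^2 + f^2$ collapses $(df/dz)^2$ to $(1-f^2)/(1+f)^2 = (1-f)/(1+f)$. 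This is the right-hand side of \eqref{equation} after the $90^\circ$ rotation $z \mapsto iz$ (which sends $(df/dz)^2 \mapsto -(df/dz)^2$) permitted by the ``up to similarity'' clause. This is the routine check flagged as straightforward in the text.

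Next I would pin down the geometry. Since $u = \Re f$ is the roof function, its null set is exactly the preimage of $\p\Omega$, so $\gamma = z^{-1}(\p\Omega) = \{\zeta \in \mathbb{F} : \Re f(\zeta) = 0\}$ and $\Omega$ corresponds to $\{\Re f > 0\}$; as recorded above this level set splits $\mathbb{F}$ into three components with $\gamma_- = -\gamma_+$, and $\Re f > 0$ throughout the central component $D_0$ by continuity from its anchoring interior point. The useful structural fact is that $z'(\zeta) = \dn(\zeta)(1 + f(\zeta))$ can vanish on $D_0$ only where $\dn = 0$ or $f = -1$; the condition $\Re f > 0$ on $D_0$ excludes $f = -1$, and one checks that the zeros of $\dn$ (at the quarter-period lattice points) fall outside $D_0$. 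Granting this, $z$ is locally univalent on all of $D_0$.

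The hard part will be upgrading local univalence to the global equality $\Omega = z(D_0)$. For this I would rerun the argument-principle computation of Theorem \ref{cosh} along $\gamma_\pm$, tracking the winding of the tangent direction $1/\sqrt{S'(z)}$ through the periodic ends, to conclude that $z$ is a single-sheeted covering of $\Omega$ by $D_0$ with no interior branch points (there being none, since $z' \neq 0$ on $D_0$). The delicate new feature, absent in the simply connected setting, is periodicity: one must verify that the quasi-period of $z(\zeta)$ contributed by $\phi(\zeta) = \int^\zeta \dn$ matches the translation period of the infinite array of holes, so that $z$ reproduces the doubly-periodic genus structure of $\Omega$ rather than an unwanted cover. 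Once the boundary correspondence $\gamma_\pm \to \p\Omega$ and univalence on $D_0$ are secured, the equality $\Omega = z(D_0)$ follows immediately from $\Omega = \{\Re f > 0\}$.
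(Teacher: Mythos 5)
Your proposal matches the paper's argument: the paper's entire proof of Proposition \ref{domains} consists of the direct verification that the ansatz satisfies \eqref{equation} via the identity $\sn^2+\cn^2=1$ (the computation you carry out explicitly, including the rotation needed to reconcile the sign, which the paper leaves implicit), followed by the observation that $\gamma_{\pm}$ splits $\mathbb{F}$ into three components and that $\Re f>0$ on the component containing the origin since $f(0)=1$. The issues you flag as ``the hard part'' --- local and global univalence of $z(\zeta)$ on $D_0$ and the matching of the quasi-period of $\phi$ with the period of the array of holes --- are not addressed in the paper at all, so on those points you are being more careful than the source rather than diverging from it.
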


\begin{remark}\label{two}
The case discussed in the proof of the theorem corresponds to the degenerate elliptic modulus $k = 0$. Then the domain $\mathbb{F}$ becomes the infinite strip
$$
T_1(0) = 4 K(1) \to \infty, \quad T_2 (0) = 4 K(0) = 2\pi,
$$
while the functions $f, g$ become (using the fact that $\dn(z, 1) \equiv 1$)
$$
z(\zeta) = \zeta + \sinh(\zeta),  \quad 
f(z(\zeta)) = \cosh(\zeta).
$$
As noted before, the conditions $\Re f(\zeta)|_{\gamma_{\pm}} = 0$ give the pre-image  $\gamma_{\pm}:=  z^{-1}(\p \Omega) = \{\Im \zeta = \pm \frac{\pi}{2} \}$, 
and the pre-image of the domain, $D_0$, becomes the strip $|\Im\zeta| \le \frac{\pi}{2}$.
\end{remark}
\par

{\bf 6.} Note that the domain $D_0(k)$ is the pre-image of the unit disk under the map $\zeta(w) : \mathbb{F} \to \mathbb{D}$,
$$
\zeta(w) = \frac{\sn(w, k) - i}{\sn(w, k) + i}, \quad k \in [0, 1],
$$
with the support of $\mu$ at points $\zeta_{\pm} = \pm \frac{1-ik}{1+ik}$, where $\mu$ is the measure discussed in the proof of Corollary \ref{cor:atoms}. The case $k \to 0$ corresponds to the strip domain and to $\zeta_{\pm} = \pm 1$. 
The reparametrization invariance noted above for the solution $f(z)$ of \eqref{equation} under rescaling of the elliptic modulus $k$ is indicative of a deeper invariance of the solution: 
all the specific solutions in $\mathbb{C}$ discussed here are associated with fixed points in the moduli space of Riemann surfaces. 

Let again $f(h(z))$ be the analytic completion of a solution, 
and denote by $\mathcal{G}$ the group of transformations which leaves ${\rm{supp}} (\mu)$ invariant up to a global rotation.
It follows that $f$ is an automorphism of the quotient of the group of linear fractional transformations by $\mathcal{G}$, 
which can be in general a Kleinian group \cite{Poincare}. 
The limit set (accumulation points of the orbits of the group) can be finite (in which case it can consist of only 0, 1, or 2 points), or infinite. 
It is known (\cite{Astala}, Thm. 10.3.4.) that the set of homeomorphic solutions for a quasilinear elliptic equation of Laplace-Beltrami type forms a group only in the case of finite limit set \cite{Astala}.  
The Kleinian groups are called degenerate in this case, and they correspond to either finite groups (with empty limit set), 
or the cyclic groups (generated by one element, with limit set consisting of 1 or 2 points). 
These correspond to the solutions described in the present paper (isolated point at infinity, respectively simple and double boundary point at infinity).


\bibliographystyle{amsplain}

\end{document}